\newtheorem{theorem}{Theorem}
\newtheorem{lemma}{Lemma}
\newtheorem{proposition}{Proposition}
\newtheorem{remark}{Remark}
\newcommand{\R}{\mathbb{R}}
\newcommand{\N}{\mathbb{N}}
\newcommand{\Z}{\mathbb{Z}}
\newcommand*\lap{\Delta}
\newcommand{\eps}{\varepsilon}
\newcommand{\dif}{\ensuremath{\,\mathrm{d}}}
\newcommand{\BMO}{\mathrm{BMO}}
\newcommand{\VMO}{\mathrm{VMO}}
\DeclarePairedDelimiter{\abs}{\lvert}{\rvert} 
\DeclarePairedDelimiter{\norm}{\lVert}{\rVert} 
\DeclarePairedDelimiter{\bra}{(}{)} 
\DeclarePairedDelimiter{\pra}{[}{]} 
\DeclarePairedDelimiter{\set}{\{}{\}} 
\title{Well-posedness and stability for a class of fourth-order nonlinear parabolic equations}
\author{Xinye Li \qquad Christof Melcher}
\begin{document}

\maketitle
\begin{abstract}
In this paper we examine well-posedness for a class of fourth-order nonlinear parabolic equation $\partial_t u + (-\lap)^2 u = \nabla \cdot F(\nabla u)$, where $F$ satisfies a cubic growth conditions. We establish existence and uniqueness of the solution for small initial data in local BMO spaces. In the cubic case  $F(\xi) = \pm \abs{\xi}^2 \xi$, we also examine the large time behavior and stability of global solutions for arbitrary and small initial data in $\VMO$, respectively. 

\medskip
\noindent \textbf{Keywords} fourth-order parabolic equation, bounded mean oscillation, stability, gradient nonlinearity, epitaxial growth

\medskip
\noindent \textbf{MSC} 35K25, 35K55, 35B35 

\end{abstract}

%
%

\section{Introduction and main results}

We consider fourth-order parabolic equations of the form
\begin{equation}\label{eq:main}
\partial_t u + (-\lap)^2 u = \nabla \cdot F(\nabla u)
\end{equation}
where $F:\R^n \to \R^n$ is a $C^1$ nonlinearity. Evolution equations of this type have been proposed in connection with the epitaxial growth of thin films, see \cite{ortiz, schulze1999geometric, zangwill}. 
In this work we consider the following growth conditions on the nonlinearity:
the derivative $F':\R^n \to \R^n$ satisfies a Lipschitz condition of the form
\begin{equation}\label{eq:lip}
\abs*{ F'(\xi_1)-F'(\xi_2)} \le C\bra*{\abs*{\xi_1} + \abs*{\xi_2} } \abs*{\xi_1 - \xi_2} \quad\text{for all}\quad \xi_1, \xi_2 \in \R^n
\end{equation}
where $C>0$. Cubic nonlinearities $F(\xi)=\pm \abs{\xi}^2 \xi$ are the prototype.
If $F(\xi)=W'(\xi)$ for a $C^2$ potential $W:\R^n \to \R$, then \eqref{eq:main} is the $L^2$-gradient flow equation for the energy
\begin{equation} \label{eq:energy}
E(u)=\int \frac{1}{2} \abs{\nabla \nabla u}^2 + W(\nabla u) \dif x.
\end{equation}
For cubic nonlinearities we have $W(\xi)=\pm \frac{1}{4}\abs{\xi}^4$. As the energy formally decreases in time, the sign lets us distinguish the coercive and non-coercive case, respectively.

The questions of interest pertains to (i) the well-posedness of \eqref{eq:main} with $F$ satisfying \eqref{eq:lip} for initial data in a certain function space, and (ii) the asymptotic property and stability of global solutions. Drawing upon a variational setting, King et al. \cite{king} established the existence, uniqueness, and regularity of solutions in bounded domains with Neumann boundary conditions. This was achieved under certain monotonicity and growth assumptions on the nonlinearity $F$ (e.g., $F(\xi)=|\xi|^{p-2}\xi - \xi$ with $p>2$) for initial data in $H^2 \cap W^{1,p}$ by proving an energy dissipation property. They also discussed the stability of solutions in a one-dimensional setting. One of us demonstrated local and global well-posedness for initial data in homogeneous Sobolev spaces $\dot{W}^{1,\frac{\alpha n}{2}}(\R^n)$ for nonlinearities $F$ satisfying a $\alpha$-Lipschitz condition 
(e.g., $F(\xi)= \pm |\xi|^{\alpha}\xi$) similar to \eqref{eq:lip} for $\alpha>2/n$, see \cite{melcher}. 
Here and in what follows homogeneous Sobolev spaces $\dot{W}^{1,q}(\R^n)$ are identified with the completion of $C^\infty_0(\R^n)$ with respect to the $W^{1,q}$ seminorm, see \cite{Brasco2021}. Sandjo et al. \cite{sandjo2015} studied \eqref{eq:main} on bounded smooth domains with appropriate boundary conditions and for nonlinearities $F$ satisfying an $\alpha$-Lipschitz condition as in \cite{melcher} but for a different range of $\alpha \in (1,2)$ and developed a local and global $L^\frac{\alpha n}{2-\alpha}$ theory. Ishige et al. \cite{ishige2020} proved \eqref{eq:main}, for the power nonlinearity $F(\xi)=-\abs{\xi}^{p-2} \xi$ with $p>2$, local and global-in-time existence results in terms of weak local Lebesgue bounds
on initial functions or gradients, respectively, depending on the size of $p$. They also gave a sufficient condition for the maximal existence time of the solution to be finite. The existence results were enhanced in \cite{ishige2022} by the use of 
local Morrey spaces in the framework of a systematic study on the solvability of the Cauchy problem for higher-order nonlinear parabolic equations. Feng et al. \cite{feng2022} proved well-posedness of \eqref{eq:main} in the spatially periodic case for the same nonlinearity $F(\xi)=-\abs{\xi}^{p-2} \xi$ for $2<p<3$ with and without advection term of an incompressible vector field. 

For cubic nonlinearities $F(\xi)=\pm \abs{\xi}^2 \xi$ the equation \eqref{eq:main} becomes scaling invariant, i.e., a smooth solution $u:\R^n \times (0,\infty) \to \R$ of \eqref{eq:main} with initial data $a$ induces a one-parameter family
\begin{equation}\label{eq:scaling}
u_\lambda (x,t)= u(\lambda x, \lambda^4 t), \quad \lambda>0
\end{equation}
of solutions of \eqref{eq:main} for the initial data
\[
a_\lambda (x) = a(\lambda x).
\]
Concerning well-posedness, this motivates a framework of function spaces based on semi-norms that feature the same kind of scaling invariance. 
An obvious example is the homogeneous Sobolev space $\dot{W}^{1,n}(\R^n)$ for which small data global well-posedness and large data local well-posedness has been shown in \cite{melcher} under the assumption of \eqref{eq:lip} if $n<4$. 
Finite time blow-up and refined existence results in the non-coercive case are shown in \cite{ishige2020} and \cite{ishige2022}, where only smallness condition of the initial gradient in local or global weak Lebesgue spaces $L^n_{\rm weak}(\R^n)$ and Morrey spaces $\mathcal{M}_{1,n-1}(\R^n)$
\footnote{Here we adopt the following convention for the (global) Morrey norm (cf. e.g. \cite{Giusti}): \[
\norm{a}_{\mathcal{M}_{p,\lambda}} =\bra*{  \sup_{\substack{x \in \R^n\\ r >0}}   \frac{1}{r^\lambda}  \int_{B(x,r)} | a(y)|^p \dif y}^{\frac{1}{p}}.
\]}, respectively, are required.

The coercive case $F(\xi)= \abs{\xi}^2 \xi$ provides in addition a uniform control of $\nabla u$ in $L^4(\R^n)$ which is independent of the dimension $n$. In combination with the 
a priori estimate
\[
\frac{d}{dt} \| \nabla^k u\|_{L^2}^2 +  \| \nabla^{k+2} u\|_{L^2}^2 \lesssim  \|\nabla u\|_{L^4}^2 \| \nabla^{k+1} u\|_{L^4}^2 
\]
for all $k \in \N$ and interpolating $\| \nabla^{k+1} u\|_{L^4}$ between $\|\nabla u\|_{L^4}$ and  $\| \nabla^{k+2} u\|_{L^2}$ one deduces global existence and regularity 
in dimension $n \le 3$. In the critical dimension $n=4$, the Sobolev inequality  $\| \nabla^{k+1} u\|_{L^4} \lesssim \| \nabla^{k+2} u\|_{L^2}$ provides uniform global bounds under 
a smallness condition on $\|\nabla u\|_{L^4}^2$. For a local solvability result without a size restriction on the initial data, see \cite{li2017GWP}. In dimension $n \ge 5$ certain regularity criterion of Serrin type have been proposed in \cite{fan2017GWP}. \\

Aiming for a somewhat unifying framework, we investigate \eqref{eq:main} in homogeneous spaces of functions of bounded mean oscillation ($\BMO$ and $\VMO$), which are good candidates for the largest possible scaling invariant spaces containing $\dot{W}^{1,n}(\R^n)$ where well-posedness of \eqref{eq:main} can be proven. The idea of BMO well-posedness of parabolic PDEs was introduced by Koch and Tataru in their seminal work on the Navier-Stokes equation \cite{koch2001}. Since then, there have been many interesting applies, e.g. for geometric flows such as the heat flow of biharmonic and polyharmonic maps, and for a surface growth model, see \cite{blomker2012, huang2011, kochlamm, wang2012}. 

The concept of BMO well-posedness rests on the Carleson-type characterisation of BMO functions based on space-time extension and averaging, see \cite{stein}.
The natural extension operator in our case is the semigroup generated by the bi-Laplacian $S(t)=e^{-t\lap^2}$ as, see Lemma \ref{la:BMO}.  This induces the following
notion of local $\BMO$ semi norms for $a\in L_{\rm loc}^1(\R^n)$ 
\[
\norm{a}_{\BMO_R} := \sum_{k=1}^2 \sup_{\substack{x \in \R^n\\ 0<r < R}} 
\bra*{  \int_0^{r^4} \fint_{B(x,r)} t^{\frac{2k-4}{4}}|\nabla^k S(t)a(y)|^2 \dif y \dif t }^{\frac{1}{2}} 
\]
defining local $\BMO$ spaces $\BMO_R$ for all $R>0$. For $R=\infty$, the space $\BMO_{\infty}$ is identical to the classical $\BMO$ space, see Remark \ref{rem:bmo},
characterized by the semi norm
\[
\norm{a}_{\BMO} = \sup_{\substack{x \in \R^n\\ r>0}}  \fint_{B(x,r)} | a(y) -a_{x,r}| \dif y
\]
where 
\[
a_{x,r} := \fint_{B(x,r)} a(y) \dif y := \frac{1}{|B(x,r)|} \int_{B(x,r)} a(y) \dif y 
\] denotes the average over $B(x,r)$. 
$\BMO$ strictly includes the homogeneous spaces of functions with 
gradients in $L^n(\R^n)\subset L^n_{\rm weak}(\R^n) \subset \mathcal{M}_{1,n-1}(\R^n)$, which can be deduced from Poincar\'e's and Jensen's inequality 
\begin{equation}\label{eq:poincare}
 \fint_{B(x,r)} | a(y) -a_{x,r}| \lesssim  r  \fint_{B(x,r)} |\nabla a(y)| \dif y  \lesssim  \left(  \int_{B(x,r)} |\nabla a(y)|^n \dif y \right)^{1/n}
\end{equation}
and the fact that $\|f\|_{L^n_{\rm weak}} \sim \sup_E |E|^{\frac{1}{n}} \fint_E |f|$, see \cite{Giusti, Garafakos}.

The space $\VMO$ of functions of vanishing mean oscillation is the closure of $C_0^\infty(\R^n)$ in $\BMO$. We say $a \in \overline{\VMO}$ iff $a \in \BMO$ and 
\[
\lim_{R \searrow 0 } \norm{a}_{\BMO_R}=0.
\]
We note that $\overline{\VMO}$ space is strictly larger than the VMO space. 
The local BMO spaces are invariant under scaling $a_\lambda(x)=a(\lambda x)$, i.e.,
\[
\norm{a_\lambda}_{\BMO_R} = \norm{a}_{\BMO_{\lambda R}}.
\]

For the well-posedness result we introduce the space $ X_T$ for $T \in (0,+\infty)$, which consists of functions $u:\R^n \times [0,T] \to \R$ such that the following norm is finite:
\begin{equation}\label{eq:norm}
\norm{u}_{ X_T}:=\sum_{k=1}^2 N_{k,\infty, T}(u) + N_{k,c,T}(u)
\end{equation}
where
\[
N_{k,\infty, T}(u):= \sup_{0 < t \le T} t^{\frac{k}{4}} \norm{\nabla^k u(\cdot,t)}_{L^\infty(\R^n)} 
\]
and
\[
 N_{k,c,T}(u):= \sup_{\substack{x \in \R^n\\ 0<r \leq T^{\frac{1}{4}}}} 
\bra*{ \int_0^{r^4} \fint_{B(x,r)} \abs{\nabla^k u(y,t)}^{\frac{4}{k}} \dif y \dif t }^{\frac{k}{4}}.
\]

For $T=+\infty$, we define  $X$, or sometimes $X_\infty$, similarly by taking the supremum over all $t>0$ and $r>0$.
Space $X_T$ is invariant under the scaling \eqref{eq:scaling}. Note that we always identify functions that differ only by a constant in $X_T$ and $\BMO_R$ spaces.

Let $X_0$ (respectively $X_{0,T}$) be the closed subspace in $X$ (respectively $X_{0,T}$) of the functions $u$ such that
\[
\lim_{t \searrow 0} \norm{u}_{X_t}=0.
\]
Solutions of (\ref{eq:main}) may be characterised implicitly by Duhamel's formula
\[
u(t) = S(t)u_0(x) + \int_0^t S(t-s) \nabla \cdot F(\nabla u(s)) \dif s.
\]
More precisely, solutions to this integral equation in certain function spaces, here $X_T$, are called mild solutions of (\ref{eq:main}).  \\

Exploiting mapping properties and a fixed point argument we prove the following local and global well-posedness result for initial data with small local and global BMO norms, respectively.
\begin{theorem}\label{thm:existence}
Suppose $F$ satisfies \eqref{eq:lip}. There exists $\rho>0$ with the following property: 
\begin{enumerate}[(i)]
\item Given $T>0$ such that \[
\norm*{u_0}_{\BMO_{T^{1/4}}} < \rho, 
\]
then there exists a unique  small solution $u \in X_{T}$ of \eqref{eq:main} with $u(0)=u_0$, where the space $X_T$ is defined in \eqref{eq:norm}.
Moreover, if $u_0 \in \overline{\VMO}$, then there exists a time $T_0 >0$ such that there exists a unique solution   $u \in X_{0,T_0}$ of \eqref{eq:main} with $u(0)=u_0$.

\item If $\norm*{u_0}_{\BMO}< \rho$, then there exists a unique  small global solution  $u \in X$ of \eqref{eq:main} to initial data $u(0)=u_0$.
\end{enumerate}
\end{theorem}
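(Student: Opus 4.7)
The plan is to view \eqref{eq:main} through Duhamel's formula and solve the resulting integral equation by Banach's fixed-point theorem applied to
\[
\Phi[u](t) := S(t)u_0 + \int_0^t S(t-s)\,\nabla \cdot F(\nabla u(s))\,\dif s
\]
on a small closed ball in $X_T$ (respectively in $X=X_\infty$ for part (ii)). The argument reduces to a \emph{linear} estimate $\|S(\cdot)u_0\|_{X_T} \lesssim \|u_0\|_{\BMO_{T^{1/4}}}$ and a \emph{trilinear} bound on the Duhamel term; since $|F(\xi_1)-F(\xi_2)| \lesssim (|\xi_1|^2+|\xi_2|^2)|\xi_1-\xi_2|$ by \eqref{eq:lip}, the trilinear bound upgrades to a Lipschitz estimate
\[
\|\Phi[u] - \Phi[v]\|_{X_T} \lesssim \bigl(\|u\|_{X_T}^2+\|v\|_{X_T}^2\bigr)\|u-v\|_{X_T}.
\]

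The linear estimate is essentially built into the definitions: the Carleson-type components $N_{k,c,T}(S(\cdot)u_0)$ fall out of Lemma~\ref{la:BMO} after applying Hölder's inequality to pass from the $L^2$ space-time averages defining $\BMO_R$ to the $L^{4/k}$ averages in $X_T$, while the $L^\infty$ components are obtained by representing $\nabla^k S(t)u_0$ as convolution against the mean-zero kernel $\nabla^k K_t$, whose rapid off-diagonal decay together with a dyadic/John--Nirenberg argument produces $t^{k/4}\|\nabla^k S(t)u_0\|_{L^\infty} \lesssim \|u_0\|_{\BMO_{t^{1/4}}}$.

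For the nonlinear bound, the $L^\infty$ components $N_{k,\infty,T}$ of the Duhamel term are straightforward: after integration by parts transferring the divergence onto the kernel, $\|\nabla^{k+1}K_\tau\|_{L^1} \lesssim \tau^{-(k+1)/4}$ combined with $\|F(\nabla u(s))\|_{L^\infty} \lesssim s^{-3/4}\|u\|_{X_T}^3$ (from $N_{1,\infty,T}$) leads to the beta-function integral
\[
\int_0^t (t-s)^{-(k+1)/4} s^{-3/4}\,\dif s \simeq t^{-k/4},
\]
which converges precisely for $k\le 2$ and produces the correct weight forced by the scaling invariance \eqref{eq:scaling}.

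\textbf{The main obstacle} is the Carleson-type component $N_{k,c,T}$ of the Duhamel term. Following the Koch--Tataru template, I would split the space-time integration over $Q_r(x) = B(x,r)\times(0,r^4)$ into a \emph{local} contribution with source supported on a dilate $2Q_r(x)$ and a \emph{far-field} contribution on the complement. On the local part the $L^{4/k}$ boundedness of parabolic singular convolution operators of the form $\nabla^k\nabla S(t-s)$ combines with the $L^4$-Carleson control of $\nabla u$ provided by $N_{1,c,T}$, plus Hölder to absorb the extra factors of $\nabla u$ in the pointwise bound for $F(\nabla u)$; on the far-field part, the smoothness and fast decay of the kernel let one close with the pointwise bound from $N_{k,\infty,T}$. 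With both estimates in hand, for $\rho$ sufficiently small $\Phi$ contracts the ball $\{u \in X_T : \|u - S(\cdot)u_0\|_{X_T}\le 2C\rho\}$, yielding the unique mild solution. The $\overline{\VMO}$ statement follows by choosing $T_0$ so small that $\|u_0\|_{\BMO_{T_0^{1/4}}}<\rho$ and noting that the vanishing property $\|u\|_{X_t}\to 0$ as $t\to 0$ passes from the linear term to the full solution because the trilinear Duhamel part gains a positive power of $t$; part (ii) is identical with $T=\infty$ and the global $\BMO$ norm.
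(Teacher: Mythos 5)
Your overall scaffolding is correct and mirrors the paper: rewrite \eqref{eq:main} via Duhamel, prove a linear bound $\norm{S(\cdot)u_0}_{X_T}\lesssim\norm{u_0}_{\BMO_{T^{1/4}}}$ and a Lipschitz/trilinear bound on the Duhamel term $G$, then run Banach's fixed point on a small ball. Your treatment of the $L^\infty$ components $N_{k,\infty,T}(G(u)-G(v))$ is in fact cleaner than the paper's: transferring the outer divergence onto the kernel and using $\abs{F(\nabla u)-F(\nabla v)}\lesssim(\abs{\nabla u}^2+\abs{\nabla v}^2)\abs{\nabla u-\nabla v}\lesssim s^{-3/4}$ leads to the beta integral $\int_0^t(t-s)^{-(k+1)/4}s^{-3/4}\,\dif s\simeq t^{-k/4}$ directly, whereas the paper keeps $F'(\nabla u)\nabla^2 w$ (an $s^{-1}$ singularity in time) and is therefore forced into a three-region split of the space-time integral. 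Your linear estimate sketch via John--Nirenberg differs from the paper's (which instead writes $S(1)a = 2\int_0^{1/2}S(1-s)S(s)a\,\dif s$ and applies Cauchy--Schwarz directly against the Carleson definition of $\BMO_R$), but both routes are plausible.

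The genuine gap is in your estimate of the Carleson components $N_{k,c,T}(G(u)-G(v))$. You invoke ``the $L^{4/k}$ boundedness of parabolic singular convolution operators of the form $\nabla^k\nabla S(t-s)$'', but for $k=1,2$ these carry only $k+1\le 3$ of the $4$ critical derivatives: their symbols behave like $\xi^{k+1}/(i\tau+\abs{\xi}^4)$, which is \emph{not} a Calder\'on--Zygmund multiplier (e.g.\ it blows up as $\xi\to0$ for $\tau$ fixed), so the maps $\phi\mapsto\int_0^t\nabla^{k+1}S(t-s)\phi(s)\,\dif s$ are not $L^p_{t,x}\to L^p_{t,x}$ bounded. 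Nor can one recover the bound by Young's inequality: the kernel $\nabla^{k+1}b$ lies only in weak $L^{(n+4)/(n+k+1)}_{t,x}$, while the source $\nabla\cdot F(\nabla u)$, via H\"older against the Carleson quantities $\nabla u\in L^4$, $\nabla^2 u\in L^2$, is just $L^1_{t,x}$ (or $F(\nabla u)\in L^{4/3}_{t,x}$), and the resulting exponent always falls strictly short of $L^{4/k}$ for $n\ge1$. The paper bridges this exact gap with a localized energy argument (Wang's method, following \cite{wang2012}): setting $\Phi=G(u)-G(v)$ and noting $(\partial_t+\lap^2)\Phi=\phi$ with $\Phi(0)=0$, one tests with $\eta^4\Phi$ for a cutoff $\eta$, integrates by parts to isolate $\int\abs{\nabla^2(\eta^2\Phi)}^2$ on the left, and controls the remainder using the $L^1$ bound on $\phi$ together with the already-established pointwise components $N_{k,\infty,T}(\Phi)$ for $k=0,1,2$; the $L^4$ bound for $\nabla\Phi$ then follows from Gagliardo--Nirenberg. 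This local energy step is the essential ingredient your sketch omits, and without it the contraction estimate on $G$ in $X_T$ does not close.
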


The term \textit{small solution} refers to smallness in $X_T$. In this case, uniqueness simply follows from the contraction property of the fixed point equation.   
Uniqueness of a wider class of mild solutions in $X_{0,T}$ is discussed in Proposition \ref{prop:uniqueness}. \\

It is instructive to compare our criterion for existence with those of
Ishige et al. in \cite{ishige2022} Section 7.4 that translate into 
a smallness condition on either
\[
\sup_{\substack{x \in \R^n\\ 0<r \leq T^{\frac{1}{4}}}} 
    \fint_{B(x,r)} | u_0(y)| \dif y
\quad \text{or} \quad
 \sup_{\substack{x \in \R^n\\ 0<r \leq T^{\frac{1}{4}}}} 
  r  \fint_{B(x,r)} | \nabla u_0(y)| \dif y.
\]
Theorem \ref{thm:existence} improves on these criteria, because the space $\mathcal{M}_{1,n}$ is equivalent to $L^\infty$, which is strictly smaller than $\BMO$ (see \cite{stein}), and due to Poincar\'e's inequality \eqref{eq:poincare}.

Our second result concerns the stability of global solutions to \eqref{eq:main} with cubic nonlinearities. The analysis, essentially based on the decay of global solutions, reveals a fundamental difference between the coercive and non-coercive case. The coercive case features stability for possibly large global solutions, whose uniqueness follows from Proposition \ref{prop:uniqueness}. However, in non-coercive case, the possibility of non-trivial static solutions entails stability under certain smallness conditions. 

\begin{theorem}\label{thm:stability}
Let $n \ge 2$ and suppose one of the following conditions are fulfilled.
\begin{itemize}
\item[(a)] Let $F(\xi)= \abs{\xi}^2 \xi$ and $u_0 \in \VMO$ generating a (unique) global solution $u \in X_{0}$ of \eqref{eq:main}.
\item[(b)] Let $F(\xi)= - \abs{\xi}^2 \xi$ and $u_0 \in \VMO$ satisfy $\norm{u_0}_{\BMO} < \rho$, making $\rho$ smaller if necessary, and $u \in X_0$ be the solution of \eqref{eq:main} associated to $u_0$.
\end{itemize}
Then
\begin{enumerate}[(i)]
\item the global solution $u$ possesses following asymptotic property
\begin{equation}\label{eq:asym}
\lim_{t \to \infty} \norm{u (t+ \cdot)}_X = 0.
\end{equation}
\item There exists a $\delta=\delta(u)>0$ such that, for every $v_0\in \BMO$ with
\[
\norm{u_0 - v_0}_{\BMO} < \delta,
\]
\eqref{eq:main} admits a global solution $v \in X$ to initial data $v_0$, which satisfies
\[
\norm{u - v}_{X} < C \delta
\]
where $C>0$ is a constant depending on $u_0$.
\end{enumerate}
\end{theorem}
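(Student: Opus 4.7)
The plan is to prove (i) first and then bootstrap it into (ii). For (i) the key observation is that, once $\norm{u(\cdot,T)}_{\BMO}<\rho$, Theorem \ref{thm:existence}(ii) applied to the shifted Cauchy problem with initial datum $u(\cdot,T)$ produces a small global solution in $X$, whose uniqueness identifies it with $u(T+\cdot)$; the $X$-norm is then linearly controlled by $\norm{u(\cdot,T)}_{\BMO}$, so it suffices to show $\norm{u(\cdot,T)}_{\BMO}\to 0$ as $T\to\infty$.

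To obtain this $\BMO$ decay I would start from Duhamel's formula
\[
u(\cdot,T)=S(T)u_0+\int_0^T S(T-s)\,\nabla\cdot F(\nabla u(\cdot,s))\dif s.
\]
The linear part tends to $0$ in $\BMO$ because $u_0\in\VMO$ is the $\BMO$-limit of $C_0^\infty$ data, and for such data the biharmonic kernel bound $\norm{S(T)\varphi}_{L^\infty}\to 0$ is classical. In case (b), the uniform smallness $\norm{u}_X\lesssim \rho$ together with the cubic structure of $F$ and the same Carleson/BMO mapping estimates used to prove Theorem \ref{thm:existence} give decay of the Duhamel integral after time shift. In case (a), I would use the parabolic smoothing carried by membership in $X_0$ to ensure classical regularity of $u(\cdot,t_0)$ on bounded regions for any $t_0>0$, apply a localized version of the energy identity (with a cutoff, since $E(u_0)$ need not be finite) to show that $\nabla u$ decays in a suitable averaged local $L^4$ sense, and then upgrade this local decay to global $\BMO$ decay via the parabolic Carleson characterization (Lemma \ref{la:BMO}).

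For (ii), I set $w=v-u$ and consider the perturbation equation
\[
\del_t w+(-\lap)^2 w=\nabla\cdot\bigl[F(\nabla u+\nabla w)-F(\nabla u)\bigr],\qquad w(\cdot,0)=v_0-u_0.
\]
Using (i) I pick $T=T(u)$ with $\norm{u(T+\cdot)}_X<\rho/4$. On $[0,T]$ the nonlinearity satisfies $\abs{F(\nabla u+\nabla w)-F(\nabla u)}\lesssim (\abs{\nabla u}^2+\abs{\nabla w}^2)\abs{\nabla w}$ by \eqref{eq:lip}, so the same Carleson/BMO estimates as in Theorem \ref{thm:existence} yield a contraction in a small ball of $X_T$, producing a unique $w$ with $\norm{w}_{X_T}\le C(u,T)\,\delta$ (the constant is finite because $\norm{u}_{X_T}$ is). This forces $\norm{v(\cdot,T)}_{\BMO}\le\norm{u(\cdot,T)}_{\BMO}+C(u,T)\delta<\rho/2$ once $\delta$ is small enough, and Theorem \ref{thm:existence}(ii) extends $v$ to a global small solution on $[T,\infty)$. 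Concatenating the bounds on $[0,T]$ and $[T,\infty)$ yields the difference estimate $\norm{u-v}_X\lesssim\delta$.

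The main obstacle I expect is the $\BMO$ decay of $u(\cdot,T)$ in the coercive case (a). Here the solution may be genuinely large, the initial energy $E(u_0)$ need not be finite because $\VMO\not\hookrightarrow H^2\cap W^{1,4}$, and the Lyapunov argument must therefore be run in a localized form and then recombined via the Carleson characterization of $\BMO$; this interplay between coercivity, localized energy dissipation, and parabolic regularization is the delicate technical core of the theorem.
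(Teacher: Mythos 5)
Your high-level skeleton matches the paper (reduce (i) to showing $\norm{u(\cdot,T)}_{\BMO}\to 0$ and feed it into the small-data theory of Theorem~\ref{thm:existence}, then bootstrap into (ii)), but there are two genuine gaps.

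For part (ii) you claim a contraction for $w$ on $[0,T]$ "because $\norm{u}_{X_T}$ is finite." Finite is not enough: the term in the perturbation equation that is \emph{linear} in $w$ is $\mathcal{L}_{u,u}w$, whose operator norm on $X_T$ is of size $\norm{u}_{X_T}^2$, and in case (a) this may be much larger than one, so the iteration is not a contraction. The paper resolves this by proving that $I+\mathcal{L}_{u,u}$ is invertible on $X$ (Lemma~\ref{la:spectrum}) and then solving $w=(I+\mathcal{L}_{u,u})^{-1}\bigl[S(\cdot)w_0+\mathcal{B}_u(w,w)-\Psi(w,w,w)\bigr]$ by Lemma~\ref{la:NL}; the invertibility in turn rests on covering $(0,\infty)$ by finitely many time-shifted intervals on which $u$ is small in the local $X$-norm, which is where Lemma~\ref{la:Nshift} and the asymptotic property (i) are actually used. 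Your proposal contains no analogue of this subdivision, and without it the large-$u$ case of (ii) does not close.

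For part (i) in the coercive case (a), your cutoff-localized energy identity is not the paper's route and I do not see how it closes. Localizing $E(u)$ with a cutoff produces boundary-flux terms involving local $L^2$ norms of $u$ on annuli, which for a generic $\VMO$ datum (think $\log\abs{x}$-type behavior) are not under control, and you propose no mechanism to absorb them or to extract decay from them. The paper instead decomposes $u_0=f_0+g_0$ with $f_0\in C_0^\infty(\R^n)$ and $\norm{g_0}_{\BMO}$ small, lets $g\in X$ be the small global solution with datum $g_0$, and runs a \emph{global} (no cutoff) $L^2$ energy estimate for $f=u-g$ — legitimate because $f$ does have finite $L^2$ norm (Lemma~\ref{la:f_localregularity}). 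Coercivity gives the good sign on $\norm{\nabla f}_{L^4}^4$, the cross terms are controlled by the smallness of $g$, and the output is $\norm{f(t)}_{L^2}^2\lesssim t^\beta$ with $\beta\sim\eps^2$ arbitrarily small; interpolating with the $X$-bound on $\nabla^2 f$ via Gagliardo--Nirenberg yields $\norm{\nabla f(t)}_{L^n}\to 0$ and hence $\norm{f(t)}_{\BMO}\to 0$. This decomposition is the missing idea in your proposal; it also underlies case (b), where decay of $f$ is obtained from a global $L^p_T$ fixed-point estimate (Lemma~\ref{la:f_regularity}) rather than the unspecified "Duhamel decay after time shift" you gesture at.
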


Applying Theorem \ref{thm:stability} to $u \equiv 0$, we recover the existence result in Theorem \ref{thm:existence} (ii) for the specific nonlinearity $F(\xi)= \pm \abs{\xi}^2 \xi$ and spatial dimensions $n \ge 2$.
In the coercive case, Theorem \ref{thm:stability} provides an improvement of the small data global existence result in Theorem \ref{thm:existence}.

Our discussion about global existence and regularity in the coercive case reveals the critical dimension $n=4$ where also the  governing energy functional \eqref{eq:energy} is scaling invariant. In the non-coercive case  $F(\xi)= -|\xi|^2 \xi$ this criticality is accompanied by the occurrence of non-trivial static solutions. Explicitly we have   
\[
u_0(x) = \pm 2 \ln \abs{x}.
\]
Although $u_0$ is a $\BMO$ function \cite{stein} (even in $\dot{W}^{1,4}_{\rm weak}(\R^4)$), it is neither a $\VMO$ function nor in $X_T$ for any $T>0$. Solutions that may be obtained by constrained minimization of $\|\nabla^2 u\|_{L^2}^2$ in the class $\|\nabla u\|_{L^4}=const.$ are necessarily large in $\VMO$ and $X_T$ and do not contradict the decay and stability result.

For the proof of Theorem \ref{thm:stability}, we adapt the method introduced by Auscher, Dubois, and Tchamitchian \cite{auscher2004stability} and Gallagher, Iftimie, and Planchon \cite{gallagher2003asymptotics} in the context of stability of Koch-Tataru solutions to the Navier-Stokes equation. 
An essential ingredient in their proof of stability is the cancellation property of the trilinear terms related to convection, which allows for an energy type estimate and, hence, the decay property of global solutions. Equation \eqref{eq:main} does not possess any similar cancellation property, and we need to distinguish between different cases.  In the coercive case $F(\xi) = \abs{\xi}^2 \xi$, we can use a similar energy argument as in \cite{auscher2004stability} to deduce the asymptotic behaviour of global solutions to any initial data in VMO. In the non-coercive case $F(\xi) = -\abs{\xi}^2 \xi$, instead, we prove uniform $L^p$-boundedness by a fixed-point argument and use Gagliardo-Nirenberg interpolation and the embedding of homogeneous Sobolev spaces into BMO space to derive the decay property of global solutions. Due to the fixed-point argument, this result is valid only for the solutions to small initial data.

The rest of the paper is organized as follows. In Section 2 we recall some basic properties of the biharmonic heat kernel. In Section 3 we prove Theorem \ref{thm:existence}, showing existence and uniqueness of solutions, and discuss their regularity properties. In section 4 we address the large time behaviour of the global solution and prove Theorem \ref{thm:stability}. Throughout the paper, we use $C_k$, $k=1,2,\cdots$, to denote constants with fixed values and use $C$ to denote constants that may vary from line to line.

\section{Preliminaries}
In this section, we review some fundamental properties on the biharmonic heat kernel and local BMO spaces.

The biharmonic heat kernel is the fundamental solution of the biharmonic heat equation
\[
(\partial_t + (-\Delta)^2) b(x,t) = 0 \quad \R^n \times (0,\infty),
\]
and it is given by
\[
b(x,t)=t^{-\frac{n}{4}} g\bra*{\frac{x}{t^{1/4}}}, \qquad b(\cdot,0)=\delta_0
\]
where $g$ is the Schwartz function given by
\[
g(x)=(2\pi)^{-\frac{n}{2}} \int_{\R^n} e^{i x \cdot \xi - \abs{\xi}^4} \dif \xi, \quad x \in \R^n.
\]

We collect some basic properties for the biharmonic heat kernel $b$, whose proof can be found in \cite{kochlamm}.
\begin{lemma}\label{la:basic}
\begin{enumerate}[(i)]
\item We have for every $t>0$ and $x \in \R^n$ that
\[
 \int_{\R^n} \nabla^k b(x-y,t) \dif y = 
\left\{\begin{array}{ll}
1, & k=0,\\ 0, & k \geq 1.
 \end{array} \right.
\]
\item For every $k, N\in \N_0$, there is a constant $c>0$ such that
\begin{equation}\label{eq:kernel_abs1}
 |\nabla^k b(x,t)| \leq C t^{-\frac{1}{4}(n+k-N)}(t^{\frac{1}{4}} + |x|)^{-N}
\end{equation}
for all $t>0$ and $x \in \R^n$, in particular, 
\begin{equation}\label{eq:kernel_abs2}
 |\nabla^k b(x,t)| \leq C(t^{\frac{1}{4}} + |x|)^{-n-k},
\end{equation}
and
\begin{equation}\label{eq:kernel_L1}
 \| \nabla ^k b(\cdot,t)\|_{L^1(\R^n)} \leq C t^{-\frac{k}{4}}.
\end{equation}
\end{enumerate}
\end{lemma}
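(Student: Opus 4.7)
The plan is to exploit the self-similar structure $b(x,t) = t^{-n/4} g(x/t^{1/4})$ together with the Schwartz nature of $g$, which is manifest from its Fourier representation $\hat{g}(\xi) = e^{-|\xi|^4}$.

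For part (i), I would pass to the Fourier side, where $\hat b(\xi,t) = e^{-t|\xi|^4}$. Then $\int_{\R^n} b(y,t)\dif y = \hat b(0,t) = 1$, and the same equality holds with argument $x-y$ by translation invariance of Lebesgue measure; this gives the $k=0$ identity. For $k \geq 1$, applying the same calculation to $\nabla^k b$ yields $\widehat{\nabla^k b}(0,t) = 0$ because the Fourier symbol of $\nabla^k$ vanishes at the origin. Equivalently, since $b(\cdot,t) \in \mathcal{S}(\R^n)$, one may simply integrate by parts and use decay at infinity.

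For part (ii), I would differentiate the scaling relation to obtain
\[
\nabla^k b(x,t) = t^{-(n+k)/4}\, (\nabla^k g)(x/t^{1/4}).
\]
Since $g$ is Schwartz, for every $M \in \N$ there exists $C_M>0$ with $|\nabla^k g(y)| \leq C_M (1+|y|)^{-M}$. Choosing $M=N$ and rewriting $1+|x|/t^{1/4} = (t^{1/4}+|x|)/t^{1/4}$, I get
\[
|\nabla^k b(x,t)| \leq C\, t^{-(n+k)/4}\cdot t^{N/4}(t^{1/4}+|x|)^{-N} = C\, t^{-(n+k-N)/4}(t^{1/4}+|x|)^{-N},
\]
which is \eqref{eq:kernel_abs1}. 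Setting $N=n+k$ in \eqref{eq:kernel_abs1} gives \eqref{eq:kernel_abs2}. For the $L^1$ bound, the substitution $y = x/t^{1/4}$ yields $\|\nabla^k b(\cdot,t)\|_{L^1} = t^{-k/4}\|\nabla^k g\|_{L^1}$, and the Schwartz property of $g$ makes the latter norm finite.

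No step in this argument is genuinely obstructive; the only place where care is needed is the bookkeeping in translating between the two equivalent weighted pointwise bounds, i.e.\ passing from $(1+|x|/t^{1/4})^{-M}$ to the dimensionally homogeneous weight $(t^{1/4}+|x|)^{-N}$. I would also note as a sanity check that the $L^1$ scaling exponent $t^{-k/4}$ matches the parabolic scaling \eqref{eq:scaling} underlying the choice of norms used later in the paper.
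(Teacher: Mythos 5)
Your proof is correct and complete. The paper itself does not give a proof of this lemma but defers to \cite{kochlamm}; the argument you give — exploiting the self-similar form $b(x,t)=t^{-n/4}g(x/t^{1/4})$, the Fourier representation $\hat b(\xi,t)=e^{-t|\xi|^4}$ for the normalization in (i), and the Schwartz decay of $\nabla^k g$ for (ii), with the substitution $y=x/t^{1/4}$ for the $L^1$ bound — is the standard one and matches what the cited reference does. The bookkeeping step you flag, rewriting $(1+|x|/t^{1/4})^{-N}$ as $t^{N/4}(t^{1/4}+|x|)^{-N}$, is handled correctly.
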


Using \eqref{eq:kernel_L1} and the Young's inequality we obtain the following estimate in $L^p$-spaces for the semigroup $S(t)=e^{-t \lap^2}$:
\begin{lemma}\label{la:St}
Let $k \in \N_0$ and $1 \le p \le q \le +\infty$. Then there exists a constant $C=C(k,n)$ such that for any $t>0$ and $\varphi \in L^q(\R^n)$
\begin{equation}\label{eq:St}
\norm{\nabla^k S(t) \varphi}_{L^q(\R^n)} \le C t^{-\frac{n}{4}\bra*{\frac{1}{p}-\frac{1}{q}}-\frac{k}{4}} \norm{\varphi}_{L^p(\R^n)} .
\end{equation}
\end{lemma}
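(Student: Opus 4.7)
The strategy is completely prescribed by the remark preceding the statement: represent $S(t)$ as convolution against the biharmonic heat kernel and apply Young's inequality with a suitably chosen intermediate exponent. The plan is to write
\[
\nabla^k S(t) \varphi = \bigl(\nabla^k b(\cdot,t)\bigr) * \varphi,
\]
pick $r \in [1,\infty]$ satisfying $\frac{1}{r} = 1 + \frac{1}{q} - \frac{1}{p}$ (which is admissible because $p \leq q$), and apply Young's inequality to get
\[
\norm{\nabla^k S(t) \varphi}_{L^q} \leq \norm{\nabla^k b(\cdot,t)}_{L^r} \norm{\varphi}_{L^p}.
\]
It then suffices to show that $\norm{\nabla^k b(\cdot,t)}_{L^r} \leq C t^{-\frac{n}{4}(1 - \frac{1}{r}) - \frac{k}{4}}$, as this exponent is exactly $-\frac{n}{4}(\frac{1}{p} - \frac{1}{q}) - \frac{k}{4}$.

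For the kernel bound I would proceed by scaling. Since $b(x,t) = t^{-n/4} g(x/t^{1/4})$ with $g$ a Schwartz function, one has $\nabla^k b(x,t) = t^{-(n+k)/4}(\nabla^k g)(x/t^{1/4})$, and a change of variables yields
\[
\norm{\nabla^k b(\cdot,t)}_{L^r} = t^{-\frac{n+k}{4} + \frac{n}{4r}} \norm{\nabla^k g}_{L^r},
\]
which is the desired bound provided $\norm{\nabla^k g}_{L^r} < \infty$ for every $r \in [1,\infty]$. The latter is immediate from Schwartz decay; alternatively, one can avoid invoking this and instead integrate the pointwise estimate \eqref{eq:kernel_abs2}, noting $\int_{\R^n} (t^{1/4} + |x|)^{-(n+k)r} dx$ converges for all $r \geq 1$ (since $(n+k)r \geq n+k > n$) and scales correctly.

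Combining the two ingredients gives the claimed inequality. There is no real obstacle here: the endpoint cases $r=1$ (corresponding to $p=q$) and $r=\infty$ (corresponding to $p=1$, $q=\infty$) coincide respectively with the already-stated bound \eqref{eq:kernel_L1} and with the pointwise bound \eqref{eq:kernel_abs2} at $x=0$, so the argument is genuinely an interpolation of those two endpoint estimates via Young's inequality.
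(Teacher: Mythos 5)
Your proof is correct and matches the paper's intended approach (the paper gives only a one-line remark citing \eqref{eq:kernel_L1} and Young's inequality; your version correctly supplies the $L^r$ kernel bound via the self-similar scaling $\nabla^k b(x,t) = t^{-(n+k)/4}(\nabla^k g)(x/t^{1/4})$, which is what is actually needed when $p<q$). One small caveat on your alternative route: integrating \eqref{eq:kernel_abs2} fails at the endpoint $k=0$, $r=1$, where $(n+k)r = n$ and the integral $\int (t^{1/4}+|x|)^{-n}\,\dif x$ diverges, so the Schwartz-decay argument you give first (or \eqref{eq:kernel_L1} directly) is genuinely needed for that case.
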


For the purpose of this paper, we recall the Carleson measure characterization of BMO spaces (see Stein \cite{stein} Theorem IV.3):

Let $\Phi: \R^n \to \R$ be a Schwartz function with $\int_{\R^n} \Phi =0$. For $t>0$, let $\Phi_t(x)=t^{-n}\Phi(\frac{x}{t})$. Suppose $\Phi$ is non-degenerate ,
\[
\int_{\R^n} \frac{ \abs{a(x)} \dif x}{1+\abs{x}^{n+1}} < \infty,
\]
and $\dif \mu = |\Phi_t * a(x)|^2 \frac{\dif x \dif t}{t}$ satisfies
\[
\sup_{x \in \R^n,r>0} \int_0^r \fint_{B(x,r)} |\Phi_t * a(x)|^2 \frac{\dif x \dif t}{t} < \infty.
\]
Then $a$ is in BMO and
\begin{equation}\label{la:BMOcarleson}
\norm{a}_{\BMO}^2
\leq
C \sup_{x \in \R^n,r>0} \int_0^r \fint_{B(x,r)} |\Phi_t * a(x)|^2 \frac{\dif x \dif t}{t} .
\end{equation}

Following the arguments in \cite{stein}, we have a similar Carleson measure characterization for local BMO spaces:
\begin{lemma}\label{la:BMO}
Let $\Phi$ be a Schwartz function with $\int_{\R^n} \Phi =0$. For $t>0$, let $\Phi_t(x)=t^{-n}\Phi(\frac{x}{t})$. 
If 
\[
 \sup_{\substack{x \in \R^n\\ 0<r \leq 2R}} \fint_{B(x,r)} \abs{a(y) - a_{x,r}} \dif y < \infty
\]
for some $R>0$, then
\begin{equation}\label{eq:BMOcarleson}
\sup_{\substack{x \in \R^n \\ 0 < r \le R}}  \int_0^r \fint_{B(x,r)} |\Phi_t * a(x)|^2 \frac{\dif x \dif t}{t} 
\leq C \bra*{\sup_{\substack{x \in \R^n\\ 0<r \leq 2R}} \fint_{B(x,r)} \abs{a(y) - a_{x,r}} \dif y}^2
\end{equation}
for some $C=C(n)>0$. In particular, we have
\[
 \norm{a}_{\BMO_{R}} \le C \sup_{\substack{x \in \R^n\\ 0<r \leq 2R}} \bra*{ \fint_{B(x,r)} \abs{a(y) - a_{x,r}} \dif y}.
\]
\end{lemma}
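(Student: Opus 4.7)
The plan is to adapt Stein's Carleson-measure characterisation of BMO \cite{stein} to the local setting. Set $M := \sup_{x, 0 < \rho \le 2R} \fint_{B(x, \rho)} |a - a_{x, \rho}|\,dy$. Fix $x_0 \in \R^n$ and $0 < r \le R$, and decompose $a = a_{x_0, 2r} + a_1 + a_2$ with $a_1 := (a - a_{x_0, 2r}) \chi_{B(x_0, 2r)}$ and $a_2 := (a - a_{x_0, 2r}) \chi_{B(x_0, 2r)^c}$. Since $\int \Phi = 0$, convolution with $\Phi_t$ annihilates the constant piece, and it suffices to estimate the contributions of $a_1$ and $a_2$ separately.

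For $a_1$, Plancherel's theorem combined with $\hat\Phi(0)=0$ and the Schwartz decay of $\hat\Phi$ yields the square-function bound $\int_0^\infty \int_{\R^n} |\Phi_t * a_1|^2\,\frac{dx\,dt}{t} \le C\|a_1\|_{L^2}^2$. The inclusion $B(x_0, 2r) \subset B(x_0, 2R)$ permits the use of a local John--Nirenberg inequality (obtained by running the standard Calder\'on--Zygmund stopping-time argument entirely inside $B(x_0, 2R)$), giving $\|a_1\|_{L^2}^2 \le CM^2 |B(x_0, 2r)|$. Dividing by $|B(x_0, r)| \sim r^n$ produces an $a_1$-contribution of order $CM^2$.

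For $a_2$, I would exploit the Schwartz decay $|\Phi_t(z)| \le C_N t^N/(t + |z|)^{n+N}$, together with the geometric fact $|x - y| \gtrsim |y - x_0|$ for $x \in B(x_0, r)$ and $y \notin B(x_0, 2r)$, to get $|\Phi_t(x - y)| \le C_N t^N / |y - x_0|^{n+N}$ when $t \le r$. Splitting $B(x_0, 2r)^c$ into dyadic annuli $A_k = B(x_0, 2^{k+1} r) \setminus B(x_0, 2^k r)$ and inserting intermediate averages telescopes to
\[
\int_{A_k} |a - a_{x_0, 2r}|\,dy \le C(1 + k)\,M\,(2^k r)^n
\]
for those $k$ with $2^{k+1} r \le 2R$. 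Once $2^{k+1} r$ exceeds $2R$ the direct BMO telescope must stop at scale $R$; I would then compare $a(y)$ with $a_{B(y, R)}$ and propagate the average back to $x_0$ by chaining overlapping balls of radius $R$, a step that costs only an extra factor $1 + 2^k r/R$ because the local BMO bound forces $y \mapsto a_{B(y, R)}$ to be Lipschitz at scale $R$. Choosing $N$ large enough the resulting geometric series is summable and produces $|\Phi_t * a_2(x)| \le CM(t/r)^N$, so the $a_2$-contribution is also bounded by $CM^2$.

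Combining both estimates proves \eqref{eq:BMOcarleson}. The bound on $\norm{a}_{\BMO_R}$ then follows by applying \eqref{eq:BMOcarleson} with $\Phi = \nabla^k g$ for $k = 1, 2$: by Lemma~\ref{la:basic}(i), $\nabla^k g$ is Schwartz with vanishing integral, and the substitution $s = t^{1/4}$ together with the self-similar identity $\nabla^k b(x, s^4) = s^{-n-k} (\nabla^k g)(x/s)$ converts each summand in the definition of $\norm{a}_{\BMO_R}$ into the left-hand side of \eqref{eq:BMOcarleson}. The principal technical obstacle is the far-field estimate when $2^{k+1} r > 2R$, where the absence of global BMO control forces one to exploit the Lipschitz-type behaviour of $y \mapsto a_{B(y, R)}$ extracted from the local BMO bound.
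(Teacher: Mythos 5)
Your proof is a correct adaptation of Stein's Carleson-measure characterisation of BMO to the local setting; this is precisely what the paper invokes, since it cites Stein and gives no further argument for Lemma~\ref{la:BMO}. The only small imprecision is in the far-field chaining: comparing $a_{y,R}$ with $a_{x_0,2r}$ costs $CM\bigl(1+\lvert\log(R/r)\rvert+2^k r/R\bigr)$ rather than $CM\bigl(1+2^k r/R\bigr)$, because one must also telescope from scale $2r$ up to scale $R$ before chaining; but since in the far-field regime $2^k r > R$ one has $\log_2(R/r) < k$, this extra logarithmic term is absorbed by the geometric decay in $k$ once $N \ge 2$, and the conclusion $|\Phi_t * a_2(x)| \le CM(t/r)^N$ — and hence the Carleson bound — is unchanged.
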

\begin{remark}\label{rem:bmo}
\begin{itemize}
\item[(1)] Actually we have
\[
\norm{a}_{\BMO_{R}} \le C_\lambda \sup_{\substack{x \in \R^n\\ 0<r \leq \lambda R}} \bra*{\fint_{B(x,r)} \abs{a(y) - a_{x,r}} \dif y} 
\]
for any fixed $\lambda>1$. For the simplicity, we choose $\lambda=2$.
\item[(2)] The inequality \eqref{eq:BMOcarleson} holds for any local BMO spaces on the right hand side with a radius larger than $R$.
Since \eqref{la:BMOcarleson} and \eqref{eq:BMOcarleson}, the space $\BMO_R$ for $R = \infty$ is identical to the standard BMO space. 
\end{itemize}
\end{remark}

\section{Well-posedness in local BMO spaces}

\subsection{Basic estimates for the biharmonic heat equation}

We first examine mapping properties of the biharmonic heat semigroup $S=S(t)$ in local $\BMO$ spaces.

\begin{lemma}\label{la:v0}
For $0 < T \leq +\infty$, if $a \in \BMO_{T^{1/4}}$, then there exists a constant $C=C(n)$ so that for $S(\cdot)a$ we have
\begin{equation}\label{eq:v0derivative}
N_{k, \infty, T }(S(\cdot)a) \le C\norm{a}_{\BMO_{T^{1/4}}} 
\end{equation}
and
\begin{equation}\label{eq:v0integral}
 N_{k, c,T}(S(\cdot)a) \le C \|a\|_{\BMO_{T^{1/4}}} \quad \text{for } k=1,2.
\end{equation}
\end{lemma}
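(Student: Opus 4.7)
The plan is to combine a standard local $L^\infty$--$L^2$ regularity estimate for the biharmonic heat equation with the Carleson-integral structure of the $\BMO_R$ seminorm. I will first invoke the sub-solution type inequality asserting that, for any smooth $w$ with $\partial_t w + \Delta^2 w = 0$,
\[
|w(x,t_0)|^2 \le C\, t_0^{-1 - n/4} \int_{t_0/2}^{t_0} \int_{B(x, t_0^{1/4})} |w(y,s)|^2 \dif y \dif s,
\]
which can be obtained by Caccioppoli plus Moser iteration, as used in \cite{kochlamm}. Since the biharmonic heat kernel is Schwartz, $w = \nabla^k S(\cdot)a$ is smooth on $t>0$ and solves the biharmonic heat equation, so this estimate applies pointwise.

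For the pointwise bound \eqref{eq:v0derivative} with $k=2$, the weight $s^{(2k-4)/4} = 1$ in the definition of $\norm{a}_{\BMO_R}$ directly bounds the inner integral by $C\, t_0^{n/4} \norm{a}_{\BMO_{T^{1/4}}}^2$ for $t_0 \le T$ (using monotonicity of $\norm{a}_{\BMO_r}$ in $r$), which produces $t_0^{1/2} \norm{\nabla^2 S(t_0)a}_{L^\infty} \le C\norm{a}_{\BMO_{T^{1/4}}}$. For $k=1$, the defining weight is $s^{-1/2}$, and on the sub-interval $s \in (t_0/2, t_0)$ I substitute $|\nabla S(s)a|^2 \le t_0^{1/2}\, s^{-1/2} |\nabla S(s)a|^2$ to reduce again to the Carleson integral in $\norm{a}_{\BMO_{T^{1/4}}}^2$, gaining the extra factor $t_0^{1/2}$ that yields $t_0^{1/4} \norm{\nabla S(t_0)a}_{L^\infty} \le C\norm{a}_{\BMO_{T^{1/4}}}$.

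For the Carleson-type bound \eqref{eq:v0integral}, the case $k=2$ is immediate: the quantity $\int_0^{r^4} \fint_{B(x,r)} |\nabla^2 S(t)a|^2 \dif y\dif t$ is exactly the $k=2$ summand of $\norm{a}_{\BMO_R}^2$, modulo the trivial passage between $\sup_{r<R}$ and $\sup_{r\le R}$ via continuity. For $k=1$, I interpolate the $L^4$ integral via the pointwise bound just obtained:
\[
\int_0^{r^4} \fint_{B(x,r)} |\nabla S(t)a|^4 \dif y\dif t \le \int_0^{r^4} \norm{\nabla S(t)a}_{L^\infty}^2 \fint_{B(x,r)} |\nabla S(t)a|^2 \dif y\dif t,
\]
and inserting $\norm{\nabla S(t)a}_{L^\infty}^2 \le C\, t^{-1/2} \norm{a}_{\BMO_{T^{1/4}}}^2$ reduces the estimate to $\int_0^{r^4} t^{-1/2} \fint_{B(x,r)} |\nabla S|^2 \dif y\dif t$, which is once more a summand of $\norm{a}_{\BMO_{T^{1/4}}}^2$.

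The only truly non-cosmetic ingredient is the sub-solution estimate; the rest of the argument is just bookkeeping of the parabolic scaling weights $s^{(2k-4)/4}$ and of the passage between $L^\infty$, $L^2$ and $L^4$ integrability in space-time. If the sub-solution estimate were to be avoided, one could alternatively decompose $\nabla^k S(t)a$ via the convolution representation (using that $\int \nabla^k b = 0$) into dyadic annular contributions of $a - a_{x,t^{1/4}}$, but this would require controlling ball oscillations of $a$ by $\norm{a}_{\BMO_{T^{1/4}}}$, which is less natural in the Carleson-based formulation adopted here.
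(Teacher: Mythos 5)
Your argument is correct, and for the pointwise bound \eqref{eq:v0derivative} it takes a genuinely different route from the paper. The paper never invokes an interior $L^\infty$--$L^2$ regularity estimate for solutions of the biharmonic heat equation; instead it exploits the semigroup identity $S(1)a = 2\int_0^{1/2} S(1-s)S(s)a\,\dif s$, integrates by parts to put $\nabla^{k-1}$ on the kernel $b$ and $\nabla$ on $S(s)a$, tiles $\R^n$ by unit balls indexed by $\Z^n$, and applies Cauchy--Schwarz ball by ball, summing with the explicit kernel decay \eqref{eq:kernel_abs1}--\eqref{eq:kernel_abs2}; the weight $s^{-1/2}$ in the Carleson integral is inserted and compensated by $s^{1/2}$ on the kernel factor. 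Your plan replaces all of this bookkeeping with a single localization in a backward parabolic cylinder $B(x,t_0^{1/4})\times(t_0/2,t_0)$, which is tighter and more modular, at the cost of importing the interior $L^\infty$--$L^2$ estimate as a black box. That estimate is indeed true for the constant-coefficient biharmonic heat operator and is available in the spirit of \cite{kochlamm}, though I would avoid the word ``Moser iteration'': $|w|^2$ is not a subsolution for a fourth-order operator, so the classical De Giorgi--Nash--Moser mechanism does not apply directly; the correct justification is linear Caccioppoli/energy estimates on nested cylinders combined with Sobolev embedding and bootstrapping of interior derivatives (or, equivalently, a cutoff Duhamel representation using the Schwartz kernel). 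Your treatment of \eqref{eq:v0integral} --- identifying the $k=2$ case as a literal summand of $\norm{a}_{\BMO_{T^{1/4}}}^2$ and bounding the $k=1$ ($L^4$) case by factoring out $\norm{\nabla S(t)a}_{L^\infty}^2\le C t^{-1/2}\norm{a}_{\BMO_{T^{1/4}}}^2$ --- is the same as the paper's. One small omission: before applying the interior estimate you should note that $S(t)a$ is well defined and smooth for $t>0$ and $a\in\BMO_{T^{1/4}}$; this follows from the rapid decay of the kernel and the at-most-logarithmic growth of locally-$\BMO$ functions, and the paper's proof gets this for free from the convolution representation.
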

\begin{proof}
Since both inequalities are invariant under translation and scaling (\ref{eq:scaling}), it suffices to show
\begin{equation}\label{eq:v01}
\abs{\nabla^k S(1)a(0)}
\leq C \|a\|_{\BMO_1} 
\end{equation}
and
\begin{equation}\label{eq:v02}
\bra*{ \int_0^1 \int_{B(0,1)} |\nabla^k S(s)a(y)|^{\frac{4}{k}} \dif y \dif s}^{\frac{k}{4}}
 \leq C \|a\|_{\BMO_1} \quad \text{for} \quad k =1,2.
\end{equation}
We write
\[
 S(1)a = S(1-s) S(s) a
\quad\text{and}\quad
 S(1)a = 2\int_0^{\frac{1}{2}} S(1-s) S(s) a \dif s.
\]
Since $S$ and $\nabla$ commute, it follows by the Cauchy-Schwarz inequality that
\[
\begin{aligned}
|\nabla^k S(1) a(0)|^2
 &= | 2\int_0^{\frac{1}{2}} \int_{\R^n} \nabla^{k-1} b(y,1-s) \nabla S(s)a(y) \dif y \dif s|^2\\
& \leq C \sum_{x \in \Z^n} | \int_0^{\frac{1}{2}} \int_{B(x,1)} \nabla^{k-1} b(y,1-s) \nabla S(s)a(y) \dif y \dif s |^2 \\
 &\leq C\sum_{x \in \Z^n} \Big( \int_0^{\frac{1}{2}} \int_{B(x,1)} s^{\frac{1}{2}}|\nabla^{k-1} b(y,1-s)|^2 \dif y \dif s\Big)\Big( \int_0^1\int_{B(x,1)} s^{-\frac{1}{2}}|\nabla S(s)a(y)|^2 \dif y \dif s \Big)\\
 &\leq C \|a\|_{\BMO_1}^2 \sum_{x \in \Z^n} \Big( \int_0^{\frac{1}{2}} \int_{B(x,1)} s^{\frac{1}{2}}|\nabla^{k-1}b(y,1-s)|^2 \dif y \dif s\Big)
\end{aligned}
\]
where the union of $B(x,1)$ with $x \in \Z^n$ covers $\R^n$. 

It follows by (\ref{eq:kernel_abs2}) that 
\[
\begin{aligned}
& \sum_{x \in \Z^n}\Big(\int_0^{\frac{1}{2}} \int_{B(x,1)} s^{\frac{1}{2}}|\nabla^{k-1}b(y,1-s)|^2 \dif y \dif s \Big)
 \leq \sum_{x \in \Z^n} \Big( \int_0^{\frac{1}{2}} \int_{B(x,1)} \frac{ \dif y \dif s }{((1-s)^{\frac{1}{4}} + |y|)^{2(n+k-1)}} \Big)\\
 = & \Big( \sum_{\substack{x \in \Z^n \\ |x| \geq 2 }} + \sum_{\substack{x \in \Z^n \\ |x| < 2}} \Big)\int_0^{\frac{1}{2}} \int_{B(x,1)} \frac{ 1 }{((1-s)^{\frac{1}{4}} + |y|)^{2(n+k-1)}} \dif y \dif s .
\end{aligned}
\]
For the second sum we have
\[
 \sum_{\substack{x \in \Z^n \\ |x| < 2}} \int_0^{\frac{1}{2}} \int_{B(x,1)} \frac{1}{((1-s)^{\frac{1}{4}} + |y|)^{2(n+k-1)}} \dif y \dif s 
 \leq \sum_{\substack{x \in \Z^n \\ |x| < 2}} \frac{1}{2} |B(x,1)| 2^{\frac{n+k-1}{2}} 
 \leq C.
\]
 On the other hand, since every $y \in \R^n$ can be covered by at most $2^n$ different balls $B(x,1)$ with $x \in \Z^n$ and every point in $B(0,1)$ can not be covered by any ball $B(x,1)$, $x \in \Z^n$ with $\abs{x} \geq 2$, we obtain for the first sum that
 \[
\begin{aligned}
& \sum_{\substack{x \in \Z^n \\ |x| \geq 2}} \int_0^{\frac{1}{2}} \int_{B(x,1)} \frac{1}{((1-s)^{\frac{1}{4}} + |y|)^{2(n+k-1)}} \dif y \dif s 
\leq \sum_{\substack{x \in \Z^n \\ |x| \geq 2}} \int_0^{\frac{1}{2}} \int_{B(x,1)} \frac{1}{|y|^{2(n+k-1)}} \dif y \dif s \\
\leq & 2^n \int_0^{\frac{1}{2}} \int_{\R^n \setminus B(0,1)} \frac{1}{|y|^{2(n+k-1)}} \dif y
\leq \frac{1}{2} 2^n n|B(0,1)| \int_1^{\infty} \frac{r^{n-1}}{r^{2(n+k-1)}} \dif y 
\leq C .
\end{aligned}
\]
Therefore we prove \eqref{eq:v01} and hence \eqref{eq:v0derivative}. This implies
\[
\begin{aligned}
\int_0^1 \int_{B(0,1)} |\nabla S(s)a(y)|^4 \dif y \dif s
\leq & C \|a\|_{\BMO_1}^2 \int_0^1 \int_{B(0,1)} s^{-\frac{1}{2}}|\nabla S(s)a(y)|^2 \dif y \dif s \\
\leq & C \|a\|_{\BMO_1}^4
\end{aligned}
\]
and
\[
\int_0^1 \int_{B(0,1)} |\nabla^2 S(s)a(y)|^2 \dif y \dif s
\leq C \|a\|_{\BMO_1}^2,
\]
hence \eqref{eq:v02} and \eqref{eq:v0integral} are proved.
\end{proof}

Lemma \ref{la:v0} implies the following estimate in $ X_T$:
\begin{lemma}\label{la:Sta}
For $0 <T \leq +\infty$, if $a \in \BMO_{T^{1/4}}$, then there exists a constant $C_1=C_1(n)$ so that
\[
\norm{S(\cdot)a}_{ X_T} \leq C_1 \|a\|_{\BMO_{T^{1/4}}}.
\]
Moreover, for every $\rho>0$ and $a\in \overline{\VMO}(\R^n)$ there exists $T_0>0$ such that
\begin{equation}\label{eq:v0vmo}
\norm{S(\cdot)a}_{X_{T_0}} < \rho.
\end{equation}
\end{lemma}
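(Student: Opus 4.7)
The plan is to obtain both assertions as direct consequences of Lemma \ref{la:v0}, which already packages the essential semigroup mapping properties into local $\BMO$ bounds on the four seminorms appearing in the $X_T$ norm.

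For the first inequality, I would simply sum the two estimates \eqref{eq:v0derivative} and \eqref{eq:v0integral} from Lemma \ref{la:v0} over $k=1,2$. Since
\[
\norm{S(\cdot)a}_{X_T} = \sum_{k=1}^2 \bra*{ N_{k,\infty,T}(S(\cdot)a) + N_{k,c,T}(S(\cdot)a) },
\]
each of the four summands is controlled by $C(n)\|a\|_{\BMO_{T^{1/4}}}$, so setting $C_1 := 4C(n)$ (or any convenient constant absorbing the four bounds) yields the claim. No further computation is needed here.

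For the second statement, the key input is the definition of $\overline{\VMO}$: if $a \in \overline{\VMO}$ then
\[
\lim_{R \searrow 0}\norm{a}_{\BMO_R}=0.
\]
Given $\rho>0$, I would apply the first part with $T=T_0$ to obtain
\[
\norm{S(\cdot)a}_{X_{T_0}} \le C_1\|a\|_{\BMO_{T_0^{1/4}}},
\]
and then choose $T_0>0$ small enough that $C_1\|a\|_{\BMO_{T_0^{1/4}}}<\rho$. This is possible precisely because $T_0^{1/4}\searrow 0$ as $T_0\searrow 0$ and $\|a\|_{\BMO_R}\searrow 0$ as $R\searrow 0$ by the $\overline{\VMO}$ hypothesis.

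There is essentially no obstacle in this proof; the only subtlety to flag is that the radius in the local $\BMO$ norm scales as $T^{1/4}$, which matches both the parabolic scaling \eqref{eq:scaling} used to reduce Lemma \ref{la:v0} to the unit-scale estimates \eqref{eq:v01}--\eqref{eq:v02} and the supremum range $0<r\le T^{1/4}$ in the definition of $N_{k,c,T}$. Hence smallness of $T_0$ transfers correctly to smallness of the relevant $\BMO_R$ seminorm.
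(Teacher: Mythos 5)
Your proposal is correct and follows the same route the paper implicitly takes: the first bound is just the sum over $k=1,2$ of \eqref{eq:v0derivative} and \eqref{eq:v0integral} from Lemma \ref{la:v0}, and the second follows by combining that bound with the $\overline{\VMO}$ condition $\lim_{R\searrow 0}\|a\|_{\BMO_R}=0$. The paper leaves this derivation implicit (stating only that Lemma \ref{la:v0} implies Lemma \ref{la:Sta}), so your write-up is a faithful expansion of the intended argument.
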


\subsection{Existence and uniqueness}
Based on the continuity property of the semigroup $S=S(t)$ we shall now, by means of a contraction argument, find a unique small solution $u$ to the integral equation
\[
u(x,t) = S(t)u_0(x) + G(u)(x,t)
\]
where
\[
G(u)(x,t)=\int_0^t \int_{\R^n} b(x-y,t-s) \nabla \cdot F(\nabla u(y,s)) \dif y \dif s,
\]
called mild solution of \eqref{eq:main}.

To this end, we first prove some mapping properties of $G$.
\begin{lemma}\label{la:G}
For $0 < T \leq +\infty$ and any $K_T>0$, if $u,v \in X_T$ with
\[
\|u\|_{ X_T}, \|v\|_{ X_T} \leq K_T,
\]
then $G(u), G(v) \in X_T$ and there exists a positive constant $C_2$, that
\begin{equation}\label{eq:G2}
\|G(u)-G(v)\|_{ X_T} \leq C_2 K_T^{2} \|u-v\|_{ X_T}.
\end{equation}
and
\begin{equation}\label{eq:G1}
\|G(u)\|_{ X_T} \leq C_2 \|u\|_{ X_T}^3.
\end{equation}
\end{lemma}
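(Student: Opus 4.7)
The plan is to estimate each of the four seminorm components of the $X_T$-norm of $G(u) - G(v)$ separately; the cubic estimate \eqref{eq:G1} will then follow by running the same argument with $v \equiv 0$. Integrating the Lipschitz hypothesis \eqref{eq:lip} on $F'$ yields the pointwise consequences
\[
|F(\xi)| \leq C|\xi|^3 \quad\text{and}\quad |F(\xi_1) - F(\xi_2)| \leq C(|\xi_1|^2 + |\xi_2|^2)|\xi_1 - \xi_2|,
\]
which make the trilinear structure of the nonlinearity explicit and serve as the pointwise workhorse. Throughout, I would write the derivatives of the Duhamel term in integration-by-parts form
\[
\nabla^k G(u)(x,t) = \int_0^t \int_{\R^n} \nabla^{k+1} b(x-y,t-s)\, F(\nabla u(y,s)) \dif y \dif s,
\]
so that all derivatives sit on the kernel and one may appeal to Lemma \ref{la:basic}.

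For the sup-norm components $N_{k,\infty,T}$ with $k = 1, 2$, I would combine the kernel bound $\|\nabla^{k+1} b(\cdot,\tau)\|_{L^1} \leq C\tau^{-(k+1)/4}$ from Lemma \ref{la:basic} with the pointwise bound $\|F(\nabla u)(\cdot,s)\|_{L^\infty} \leq C s^{-3/4} N_{1,\infty,T}(u)^3$. Pulling absolute values inside the integral and invoking Young's convolution inequality produces the Beta-type integral $\int_0^t (t-s)^{-(k+1)/4} s^{-3/4} \dif s = C_k t^{-k/4}$, which converges for $k \in \{1,2\}$ and supplies the correct time weight. The bilinear version follows line by line after substituting the quadratic Lipschitz bound, giving $|F(\nabla u) - F(\nabla v)| \leq C K_T^2\, \|u-v\|_{X_T}\, s^{-3/4}$.

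The Carleson components $N_{k,c,T}$ are the subtler part, because raising the sup bound $|\nabla G(u)| \lesssim \|u\|_{X_T}^3 t^{-1/4}$ to the fourth power and integrating in time yields the divergent $\int_0^{r^4} t^{-1} \dif t$; one must genuinely exploit the Carleson component $N_{1,c,T}(u)$ via the pointwise inequality $|F(\nabla u)|^{4/3} \lesssim |\nabla u|^4$. Fixing a Carleson base $(x_0, r)$ with $r \leq T^{1/4}$, I would split the inner integral of $G(u)$ into a local piece in which $(y,s)$ lies in the parabolic enlargement $B(x_0, 4r) \times [0, 2r^4]$, and a global complementary piece. The local piece is handled by H\"older's inequality in $L^{4/3}$ combined with the biharmonic semigroup mapping of Lemma \ref{la:St}, using the $L^{4/3}$ Carleson mass of $F(\nabla u)$ controlled by $N_{1,c,T}(u)^4$. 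The global piece is controlled by the pointwise kernel decay \eqref{eq:kernel_abs2}, which yields distance weights summable over dyadic annuli around the Carleson ball, combined with the bound $|F(\nabla u)(\cdot,s)| \lesssim s^{-3/4} N_{1,\infty,T}(u)^3$.

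The main obstacle is the Carleson difference estimate for $k = 1$, where the cubic nonlinearity sits at the $L^4$ scaling threshold and the three factors in the bound $|F(\nabla u) - F(\nabla v)| \lesssim (|\nabla u|^2 + |\nabla v|^2)|\nabla u - \nabla v|$ must be distributed among the $N_{1,\infty,T}$ and $N_{1,c,T}$ seminorms of $u$, $v$, $u-v$ so that every piece of the local/global decomposition converges on the nose. The natural choice is to place two factors in $N_{1,\infty,T}$ to supply the temporal decay $s^{-1/2}$ and leave one factor to enter the Carleson cube in $L^4$; this distribution matches the mapping properties of the biharmonic heat kernel at precisely the critical exponent, and the quadratic dependence on $K_T$ together with the linear dependence on $\|u-v\|_{X_T}$ in \eqref{eq:G2} comes out by inspection. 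Estimate \eqref{eq:G1} is then obtained either from \eqref{eq:G2} with $v \equiv 0$ or by repeating the scheme directly.
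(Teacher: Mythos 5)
Your approach is genuinely different from the paper's, and part of it has a real gap. Where the paper applies the chain rule to write $\nabla\cdot F(\nabla u)=F'(\nabla u):\nabla^2 u$ (keeping $\nabla^k b$ on the kernel), you integrate $\nabla\cdot$ onto the kernel to get $\nabla^{k+1}b$ acting on $F(\nabla u)\lesssim|\nabla u|^3$. For the sup-norm components $N_{k,\infty,T}$ this is a clean simplification: Young's inequality plus the Beta integral $\int_0^t(t-s)^{-(k+1)/4}s^{-3/4}\dif s \sim t^{-k/4}$ closes immediately for $k\le 2$, whereas the paper has to do a local/global split even here. For the Carleson components, however, the paper does not do a Duhamel kernel estimate at all: it tests $(\partial_t+\Delta^2)\Phi=\phi$ with $\eta^4\Phi$, integrates by parts, and combines the already-proved sup bounds \eqref{eq:G01} with the simple Hölder bound $\|\phi\|_{L^1(B_2\times[0,1])}\lesssim K_T^2\|w\|_{X_T}$ to get $\|\nabla^2(\eta^2\Phi)\|_{L^2}$, then Gagliardo--Nirenberg for $\|\nabla\Phi\|_{L^4}$. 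This is dimension-free and involves no singular time convolution. Your Carleson strategy, by contrast, runs into two problems.

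First, the $L^{4/3}$ step you propose for $G(u)$ fails outright. Putting all three $|\nabla u|$ factors into the Carleson cube makes $F(\nabla u)\in L^{4/3}$ in space, and Lemma \ref{la:St} then gives $\|\nabla^2 S(\tau)\phi\|_{L^4}\lesssim\tau^{-n/8-1/2}\|\phi\|_{L^{4/3}}$. The time integral against a source that is merely in $L^{4/3}_s$ with kernel $(t-s)^{-n/8-1/2}$ diverges for every $n\ge 1$ (Hölder in time would require $4(n/8+1/2)<1$), so this piece of the argument cannot be salvaged at any dimension, let alone $n\ge 4$. Second, the corrected distribution you then propose for the difference estimate — two factors into $N_{1,\infty,T}$ yielding $s^{-1/2}$, one factor entering the cube in $L^4$ — does work in principle, but the resulting time operator $T\psi(t)=\int_0^t(t-s)^{-1/2}s^{-1/2}\psi(s)\dif s$ acting $L^4_t\to L^4_t$ is not something that "comes out by inspection." Naive Young's inequality in time fails (it would require the kernel $\tau^{-1/2}$ in $L^p$ with $p>2$ but $\tau^{-1/2}\in L^p$ only for $p<2$), and the unweighted Schur test also fails since $\int_s^1 K(t,s)\dif t\sim s^{-1/2}$. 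Boundedness does hold — for instance, substitute $s=t\sigma$ to get $T\psi(t)=\int_0^1(1-\sigma)^{-1/2}\sigma^{-1/2}\psi(t\sigma)\dif\sigma$ and apply Minkowski, using $\|\psi(\cdot\,\sigma)\|_{L^4_t}\le\sigma^{-1/4}\|\psi\|_{L^4}$ to land on the convergent $\int_0^1(1-\sigma)^{-1/2}\sigma^{-3/4}\dif\sigma$ — but this is exactly the kind of borderline-critical step you need to make explicit rather than wave at. The same care is needed for $k=2$. The paper's energy method sidesteps both issues entirely, which is precisely why it is the more robust route at the critical scaling.
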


\begin{proof}
First we show that
\begin{equation}\label{eq:G01}
t^k |\nabla^k \big( G(u)(x,t)- G(v)(x,t)\big)| \leq C K_T^{2} \|u-v\|_{ X_T} \quad \text{for }k=0,1,2
\end{equation}
and 
\begin{equation}\label{eq:G02}
t^{-\frac{n}{k}} \| \nabla^k \bra*{ G(u)-G(v) } \|_{L^{\frac{4}{k}}(B(x,t^{\frac{1}{4}})\times[0,t])} \leq C K_T^{2} \|u-v\|_{ X_T} \quad \text{for }k=1,2
\end{equation}
and any $x \in \R^n$, $0<t \leq T$. Since both estimates are invariant under translation and scaling \eqref{eq:scaling}, we can assume $T \ge 1$, and it suffices to show that (\ref{eq:G01}) and (\ref{eq:G02}) hold for $x=0$ and $t=1$. Let $w=u-v$ and for $k=0,1,2$ we observe
\[
\begin{aligned}
& \nabla^k \Big( G(u)(0,1) - G(v)(0,1) \Big) 
= \int_0^1 \int_{\R^n} \nabla^k b(y,1-s) \nabla^2 w(y,s) F'(\nabla u(y,s)) \dif y \dif s \\
& \quad + \int_0^1 \int_{\R^n} \nabla^k b(y,1-s) \nabla^2 v(y,s) \Big( F'(\nabla u(y,s))- F'(\nabla v(y,s)) \Big) \dif y \dif s.
\end{aligned}
\]
We split the first integral into three parts
\[
\begin{aligned}
& \quad \int_0^1 \int_{\R^n} \Big| \nabla^k b(y,1-s) \nabla^2 w(y,s) F'(\nabla u(y,s))\Big| \dif y \dif s\\
\leq & \quad C\Big( \int_{\frac{1}{2}}^1 \int_{\R^n} + \int_0^{\frac{1}{2}} \int_{B_1} + \int_0^{\frac{1}{2}} \int_{\R^n \setminus B_1} \Big) 
|\nabla^k b(y,1-s)| \: |\nabla^2 w(y,s)| \: |\nabla u(y,s)|^2 \dif y \dif s\\
=:& \quad I_{11} + I_{12} + I_{13}.
\end{aligned}
\]
It follows by (\ref{eq:kernel_L1}) that
\[
\begin{aligned}
I_{11} \leq &\quad C \int_{\frac{1}{2}}^1 \|\nabla^2 w(s)\|_{L^{\infty}} \|\nabla u(s)\|_{L^{\infty}}^2 \int_{\R^n} |\nabla^k b(y,1-s)| \dif y \dif s\\
 \leq &\quad C \|w\|_{ X_T} \|u\|_{ X_T}^2 \int_{\frac{1}{2}}^1 s^{-1} \int_{\R^n} |\nabla^k b(y,1-s)| \dif y \dif s\\
\leq &\quad C \|w\|_{ X_T} \|u\|_{ X_T}^2 \int_{\frac{1}{2}}^1 s^{-1} (1-s)^{-\frac{k}{4}} \dif s
\leq C K_T^2 \|w\|_{ X_T},
\end{aligned}
\]
by (\ref{eq:kernel_abs2}) and the H\"older's inequality that
\[
\begin{aligned}
I_{12} \leq & \quad C \int_0^{\frac{1}{2}} \int_{B_1} \frac{1}{((1-s)^{\frac{1}{4}}+|y|)^{n+k}} |\nabla^2 w(y,s)| \: |\nabla u(y,s)|^2 \dif y \dif s\\
\leq & \quad C \Big( \int_0^{\frac{1}{2}} \int_{B_1} |\nabla^2 w(y,s)|^2 \dif y \dif s \Big)^{\frac{1}{2}}
\Big( \int_0^{\frac{1}{2}} \int_{B_1} |\nabla u(y,s)|^4 \dif y \dif s \Big)^{\frac{1}{2}}\\
\leq & \quad C \|w\|_{ X_T} \|u\|_{ X_T}^2
\end{aligned}
\]
and by (\ref{eq:kernel_abs1}) for $N=n+1$ and the H\"older's inequality that
\[
\begin{aligned}
I_{13} \leq &\quad C \int_0^{\frac{1}{2}} \int_{\R^n \setminus B_1} 
(1-s)^{-\frac{1}{4}(k-1)}((1-s)^{\frac{1}{4}}+|y|)^{-(n+1)} |\nabla^2 w(y,s)| \:|\nabla u(y,s)|^2 \dif y \dif s\\
\leq &\quad C \int_0^{\frac{1}{2}} \int_{\R^n \setminus B_1} |y|^{-(n+1)} |\nabla^2 w(y,s)| \:|\nabla u(y,s)|^2 \dif y \dif s\\
\leq &\quad C \sum_{j=1}^{\infty} \int_0^{\frac{1}{2}} \int_{B_{j+1} \setminus B_j} |y|^{-(n+1)} |\nabla^2 w(y,s)| \:|\nabla u(y,s)|^2 \dif y \dif s\\
\leq &\quad C \sum_{j=1}^{\infty} j^{n-1} j^{-(n+1)} 
\Big(\sup_{x \in \R^n}\int_0^{\frac{1}{2}} \int_{B(x,1)} |\nabla^2 w(y,s)| \:|\nabla u(y,s)|^2 \dif y \dif s \Big)\\
\leq &\quad C \Big( \sum_{j=1}^{\infty} j^{-2} \Big) \sup_{x \in \R^n} \Big( \int_0^{\frac{1}{2}} \int_{B(x,1)} |\nabla^2 w(y,s)|^2 \dif y \dif s \Big)^{\frac{1}{2}}
\Big( \int_0^{\frac{1}{2}} \int_{B(x,1)} |\nabla u(y,s)|^4 \dif y \dif s \Big)^{\frac{1}{2}}\\
\leq &\quad C \|w\|_{ X_T} \|u\|_{ X_T}^2.
\end{aligned}
\]
For the second integral, we have by (\ref{eq:lip})
\[
\begin{aligned}
& \quad \int_0^1 \int_{\R^n} \Big| \nabla^k b(y,1-s) \nabla^2 v(y,s) \Big( F'(\nabla u(y,s))- F'(\nabla v(y,s)) \Big) \Big|\dif y \dif s \\
&\leq \quad C \int_0^1 \int_{\R^n} |\nabla^k b(y,1-s)| \: |\nabla^2 v(y,s)| \: \Big( |\nabla u(y,s)|+|\nabla v(y,s)|\Big) |\nabla w(y,s)|\dif y \dif s
\end{aligned}
\]
and split them also into three parts:
\[
\begin{aligned}
&\quad \int_0^1 \int_{\R^n} |\nabla^k b(y,1-s)| \: |\nabla^2 v(y,s)| \: |\nabla u(y,s)|\: |\nabla w(y,s)| \dif y \dif s\\
\leq & \Big( \int_{\frac{1}{2}}^1 \int_{\R^n} + \int_0^{\frac{1}{2}} \int_{B_1} + \int_0^{\frac{1}{2}} \int_{\R^n \setminus B_1} \Big) |\nabla^k b(y,1-s)| \: |\nabla^2 v(y,s)| \: |\nabla u(y,s)|\: |\nabla w(y,s)| \dif y \dif s\\
=& I_{21} + I_{22} + I_{23}.
\end{aligned}
\]
It follows by (\ref{eq:kernel_L1}) that
\[
\begin{aligned}
I_{21} \leq & \int_{\frac{1}{2}}^1 \|\nabla^2 v(s)\|_{L^{\infty}} \|\nabla u(s)\|_{L^{\infty}} \|\nabla w(s)\|_{L^{\infty}} \int_{\R^n} |\nabla^k b(y,1-s)| \dif y \dif s\\
\leq & C \|v\|_{ X_T} \|u\|_{ X_T} \|w\|_{ X_T} \int_{\frac{1}{2}}^1 s^{-1}(1-s)^{-\frac{k}{4}} \dif s 
\leq C K_T^2 \|w\|_{ X_T},
\end{aligned}
\]
by (\ref{eq:kernel_abs2}) and the H\"older's inequality that
\[
\begin{aligned}
I_{22} &\leq \Big( \int_0^{\frac{1}{2}} \int_{B_1} |\nabla^k b(y,1-s)|^2 |\nabla^2 v(y,s)|^2 \Big)^{\frac{1}{2}}\Big( \int_0^{\frac{1}{2}} \int_{B_1} |\nabla u(y,s)|^2 |\nabla w(y,s)|^2 \Big)^{\frac{1}{2}}\\
&\leq C \Big( \int_0^{\frac{1}{2}} \int_{B_1} \frac{ |\nabla^2 v(y,s)|^2 }{((1-s)^{\frac{1}{4}}+|y|)^{2(n+k)}}\Big)^{\frac{1}{2}}\Big( \int_0^{\frac{1}{2}} \int_{B_1} |\nabla u(y,s)|^4 \Big)^{\frac{1}{4}}
\Big( \int_0^{\frac{1}{2}} \int_{B_1} |\nabla w(y,s)|^4 \Big)^{\frac{1}{4}}\\
&\leq C K_T^2\|w\|_{ X_T},
\end{aligned}
\]
and by (\ref{eq:kernel_abs1}) for $N=n+1$ and the H\"older's inequality that
\[
\begin{aligned}
I_{23} 
&\leq C\int_0^{\frac{1}{2}} \int_{\R^n \setminus B_1} 
(1-s)^{-\frac{1}{4}(k-1)}((1-s)^{\frac{1}{4}}+|y|)^{-(n+1)} |\nabla^2 v(y,s)| \: |\nabla u(y,s)| \: |\nabla w(y,s)| \\
&\leq C \int_0^{\frac{1}{2}} \int_{\R^n \setminus B_1} |y|^{-(n+1)} 
|\nabla^2 v(y,s)| \: |\nabla u(y,s)| \: |\nabla w(y,s)| \dif y \dif s\\
&\leq C \int_0^{\frac{1}{2}} \sum_{j=1}^{\infty} \int_{B_{j+1} \setminus B_j} |y|^{-(n+1)} 
|\nabla^2 v(y,s)| \: |\nabla u(y,s)| \: |\nabla w(y,s)| \dif y \dif s\\
&\leq C \sum_{j=1}^{\infty} j^{n-1}j^{-(n+1)} \sup_{x \in \R^n} \Big( \int_0^1 \int_{B(x,1)} |\nabla^2 v(y,s)|^2 \Big)^{\frac{1}{2}}\Big( \int_0^1 \int_{B(x,1)} |\nabla u(y,s)|^4 \Big)^{\frac{1}{4}} \\
&\qquad\qquad\qquad\qquad\qquad \cdot \Big( \int_0^1 \int_{B(x,1)} |\nabla w(y,s)|^4 \Big)^{\frac{1}{4}}\\
&\leq C\Big( \sum_{j=1}^{\infty} j^{-2} \Big) \|v\|_{ X_T} \|u\|_{ X_T} \|w\|_{ X_T}
\leq C K_T^2\|w\|_{ X_T}.
\end{aligned}
\]
Similarly we can show that
\[
\begin{aligned}
\int_0^1 \int_{\R^n}|\nabla^k b(y,1-s)| \: |\nabla^2 v(y,s)| \: |\nabla v(y,s)|\: |\nabla w(y,s)|\dif y \dif s
\leq CK_T^2\|w\|_{ X_T}
\end{aligned}
\]
and therefore
\[
|\nabla^k \Big( G(u)(0,1) - G(v)(0,1) \Big)| \leq CK_T^2 \|w\|_{ X_T}.
\]
For the integral estimate (\ref{eq:G02}) we use the energy method employed by Wang \cite{wang2012}. Denote $\Phi=G(u)-G(v)$ and 
\[
\Phi(x,t)=\int_0^t \int_{\R^n} b(x-y,t-s)\phi(y,s) \dif y \dif s, 
\]
where $h: \R^n \times [0,T] \to \R$ and
\[
\phi(y,s)= \nabla^2 w(y,s)F'(\nabla u(y,s)) + \nabla^2 v(y,s) \big(F'(\nabla u(y,s)) - F'(\nabla v(y,s)) \big)
\]
Then $W$ solves
\begin{equation}\label{eq:heatPhi}
(\partial_t + \lap^2) \Phi = \phi, \quad \Phi(t=0)=0.
\end{equation}
Select a smooth cutoff function $\eta \in C_0^{\infty}(B_2)$ such that $\eta =1$ on $B_1$ . Multiplying (\ref{eq:heatPhi}) by $\eta^4 \Phi$ and integrating over $\R^n \times [0,1]$, we obtain
\begin{equation}\label{eq:Phi1}
\int_{\R^n \times \{1\}} |\Phi|^2 \eta^4 + 2 \int_{\R^n \times [0,1]} \nabla^2 \Phi \cdot \nabla^2 (\Phi \eta^4) 
= \int_{\R^n \times [0,1]} \phi \cdot \Phi \eta^4.
\end{equation}
Since
\[
\begin{aligned}
\nabla^2(\Phi \eta^4) =& \nabla (\nabla (\Phi \eta^2 \cdot \eta^2)) 
= \nabla^2 (\Phi \eta^2) \eta^2 + 2 \nabla(\Phi \eta^2) \nabla(\eta^2) + (\Phi \eta^2) \nabla^2 (\eta^2),\\
\nabla^2 (\Phi \eta^2) =& \nabla^2 \Phi\cdot \eta^2 + 2 \nabla \Phi \cdot \nabla(\eta^2) + \Phi \nabla^2 (\eta^2)
\end{aligned}
\]
we have
\[
\begin{aligned}
\nabla^2 \Phi \cdot \nabla^2(\Phi \eta^4)
=& \nabla^2 \Phi (\nabla^2 (\Phi \eta^2) \eta^2 + 2 \nabla(\Phi \eta^2) \nabla(\eta^2) + (\Phi \eta^2) \nabla^2 (\eta^2))\\
=& \nabla^2 (W \eta^2)(\nabla^2 (\Phi \eta^2) - 2 \nabla \Phi \cdot \nabla(\eta^2) - \Phi \nabla^2 (\eta^2))\\
& + \nabla^2 \Phi (2 \nabla(\Phi \eta^2) \nabla(\eta^2) + (\Phi \eta^2) \nabla^2 (\eta^2)).
\end{aligned}
\]
Therefore, integrating by parts and using H\"older's inequality, we obtain
\[
\begin{aligned}
& \int_{\R^n \times [0,1]} \nabla^2 \Phi \cdot \nabla^2 (\Phi \eta^4)\\
=& \int_{\R^n \times [0,1]} |\nabla^2 (\Phi \eta^2)|^2 
 + \nabla^2 \Phi \Big( (\Phi \eta^2) \cdot \nabla^2(\eta^2) + 2\nabla(\Phi\eta^2) \cdot \nabla(\eta^2) \Big)\\
& - \int_{\R^n \times [0,1]}\nabla^2 (\Phi\eta^2) \Big( \Phi \cdot \nabla^2(\eta^2) + 2\nabla \Phi \cdot \nabla(\eta^2) \Big)\\
=& \int_{\R^n \times [0,1]} |\nabla^2 (\Phi \eta^2)|^2
- \int_{\R^n \times [0,1]} \nabla \Phi \cdot \nabla\Big((\Phi \eta^2) \cdot \nabla^2(\eta^2) + 2\nabla(\Phi\eta^2) \cdot \nabla(\eta^2)\Big)
\\
& - \int_{\R^n \times [0,1]}\nabla^2 (\Phi\eta^2) \Big( \Phi \cdot \nabla^2(\eta^2) + 2\nabla \Phi \cdot \nabla(\eta^2) \Big)\\
 \geq & \int_{\R^n \times [0,1]} |\nabla^2 (\Phi \eta^2)|^2 \\
&- C \Big( \int_{B_2 \times [0,1]} |\Phi|^2 + |\nabla \Phi|^2 + |\nabla \Phi|\: |\Phi| + |\nabla^2 \Phi| \: |\Phi| + |\nabla^2 \Phi| \: |\nabla \Phi| \Big)\\
 \geq & \int_{\R^n \times [0,1]} |\nabla^2 (\Phi \eta^2)|^2 
- C \Big( \int_{B_2 \times [0,1]} |\Phi|^2 + |\nabla \Phi|^2 + |\nabla^2 \Phi| \: |\Phi| + |\nabla^2 \Phi| \: |\nabla \Phi|\Big)\\
\end{aligned}
\]
and
\[
\begin{aligned}
& \|\phi\|_{L^1(B_2 \times [0,1])} \\
 \leq &
\|\nabla^2 w \: F'(\nabla u)\|_{L^1(B_2 \times [0,1])} + \| \nabla^2 v \Big( F'(\nabla u) - F'(\nabla v) \Big) \|_{L^1(B_2 \times [0,1])}\\
\leq &C\Big( \|\nabla^2 w \: |\nabla u|^2\|_{L^1(B_2 \times [0,1])} + \| |\nabla^2 v| (|\nabla u|+|\nabla v|)|\nabla w| \|_{L^1(B_2 \times [0,1])} \Big)\\
 \leq & C\Big( \|\nabla^2 w \|_{ L^2(B_2 \times [0,1])} \| \nabla u\|^2_{L^4(B_2 \times [0,1])} \\
&+ \| \nabla^2 v\|_{ L^2(B_2 \times [0,1])} \|\nabla u\|_{L^4(B_2 \times [0,1])}\|\nabla w\|_{L^4(B_2 \times [0,1])}\\ 
&+ \| \nabla^2 v\|_{ L^2(B_2 \times [0,1])} \|\nabla v\|_{L^4(B_2 \times [0,1])}\|\nabla w\|_{L^4(B_2 \times [0,1])}\Big)\\
\leq & C(\|w\|_{ X_T}\|u\|_{ X_T}^2 + \|w\|_{ X_T} \|u\|_{ X_T}\|v\|_{ X_T} + \|w\|_{ X_T} \|v\|_{ X_T}^2 )\\
\leq & C K_T^2 \|w\|_{ X_T}.
\end{aligned}
\]
Substituting the above two estimates into (\ref{eq:Phi1}) and using (\ref{eq:G01}), we have
\[
\begin{aligned}
& \int_{B_1 \times [0,1]} |\nabla^2 \Phi|^2
 \leq \int_{\R^n \times [0,1]} |\nabla^2 ( \Phi \eta^2)|^2 \\
 \leq & C \int_{B_2 \times [0,1]} \Big( |\Phi|^2 + |\nabla \Phi|^2 + |\nabla^2 \Phi| \: |\Phi| + |\nabla^2 \Phi| \: |\nabla \Phi| \Big)
+ \int_{\R^n \times[0,1]} \nabla^2 \Phi \cdot \nabla^2(\Phi \eta^4)\\
 \leq & C \Big[ \sup_{0 <t \leq 1}\|\Phi(t)\|^2_{L^{\infty}}+\Big( \int_0^1 t^{-\frac{1}{2}} \dif t \Big) \Big( \sup_{0 <t \leq 1} t^{\frac{1}{2}}\|\nabla \Phi(t)\|^2_{L^{\infty}} \Big) \\
& + \Big( \int_0^1 t^{-\frac{1}{2}} \dif t \Big) \Big( \sup_{0 <t \leq 1} t^{\frac{1}{2}}\|\nabla^2 \Phi(t)\|_{L^{\infty}} \|\Phi(t)\|_{L^{\infty}}\Big)\\
& + \Big( \int_0^1 t^{-\frac{3}{4}} \dif t \Big) \Big( \sup_{0 <t \leq 1} t^{\frac{1}{2}}\|\nabla^2 \Phi(t)\|_{L^{\infty}}\Big)\Big( \sup_{0 <t \leq 1} t^{\frac{1}{4}}\|\nabla \Phi(t)\|_{L^{\infty}}\Big)\Big]\\
& + \int_{B_2 \times [0,1]} |\phi|\: |\Phi| \eta^2\\
 \leq & C \Big[ \sup_{0 < t \leq 1} \Big(\| \Phi(t)\|^2_{L^{\infty}} 
+ t^{\frac{1}{2}}\|\nabla \Phi(t)\|^2_{L^{\infty}}
+ t^{\frac{1}{2}}\|\nabla^2 \Phi(t)\|_{L^{\infty}} \|\Phi(t)\|_{L^{\infty}}\\
&+ t^{\frac{1}{2}}\|\nabla^2 \Phi(t)\|_{L^{\infty}} t^{\frac{1}{4}}\|\nabla \Phi(t)\|_{L^{\infty}} \Big)
+ \|\phi\|_{L^1(B_2 \times [0,1])} \|\Phi\|_{L^{\infty}(B_2 \times [0,1])} \Big]\\
 \leq & C (K_T^2 \|w\|_{ X_T})^2.
\end{aligned}
\]
For the $L^4$ norm of $\nabla W$ on $B(0,1) \times (0,1)$, it follows from the Gagliardo-Nirenberg inequality that
\[
\| \nabla (\eta^2 \Phi(t)) \|^4_{L^4} \leq C \| \eta^2 \Phi(t) \|^2_{L^{\infty}} \| \nabla^2 (\eta^2 \Phi(t)) \|^2_{ L^2}.
\]
Integrating with respect to $t \in [0,1]$, we get
\[
\begin{aligned}
\Big(\int_{B(0,1) \times [0,1]} |\nabla \Phi|^4\Big)^{\frac{1}{4}} 
\leq & C \sup_{0 \leq t \leq 1} \|\Phi(t)\|^{\frac{1}{2}}_{L^{\infty}} \|\nabla^2(\eta^2 \Phi)\|^{\frac{1}{2}}_{ L^2(\R^n \times [0,1])} \\
\leq & C K_T^2 \|w\|_{ X_T}.
\end{aligned}
\]
This finishes the proof of (\ref{eq:G2}) which implies (\ref{eq:G1}).
\end{proof}

Using a fixed-point iteration argument, we conclude existence and uniqueness of small local or global mild solution of \eqref{eq:main}.
\begin{proposition}\label{prop:existence}
Suppose $f$ satisfies (\ref{eq:lip}). There exists a $\rho>0$ with the following property:
\begin{enumerate}[(i)]
\item For all $T>0$, if $\|u_0\|_{\BMO_{T^{1/4}}} < \rho$, then there exists a unique small mild solution $u$ in $X_T$ of \eqref{eq:main} to initial data $u(0)=u_0$, which satisfies
\[
\|u\|_{ X_T} < 2C_1 \|u_0\|_{\BMO_{T^{1/4}}}.
\]
Moreover, if $u_0 \in \overline{\VMO}(\R^n)$ then there exists a terminal time $T_0 >0$ such that there exists a unique small mild solution $u \in X_{T_0}$ of \eqref{eq:main} with $u(0)=u_0$.
\item If $\|u_0\|_{\BMO(\R^n)} < \rho$, then there exists a unique small global mild solution $u \in X$ of \eqref{eq:main} with $u(0)=u_0$.
\end{enumerate}
\end{proposition}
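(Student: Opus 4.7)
The proposition is essentially a Banach fixed-point statement for the map
\[
\Psi(u)(t) := S(t)u_0 + G(u)(t)
\]
on a closed ball of the Banach space $X_T$, and all the ingredients are already in place: the linear estimate from Lemma \ref{la:Sta} and the trilinear/contraction estimates from Lemma \ref{la:G}. The plan is to choose the radius of the ball proportional to $\|u_0\|_{\BMO_{T^{1/4}}}$ and then to select $\rho$ so small that both the self-mapping and the contraction conditions are satisfied simultaneously.

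More concretely, for part (i) I would set $R := 2C_1 \|u_0\|_{\BMO_{T^{1/4}}}$ and work in the closed ball
\[
B_R := \set{ u \in X_T : \norm{u}_{X_T} \le R }.
\]
By Lemma \ref{la:Sta} we have $\|S(\cdot)u_0\|_{X_T} \le C_1 \|u_0\|_{\BMO_{T^{1/4}}} = R/2$, and by Lemma \ref{la:G} (applied with $K_T = R$) we get $\|G(u)\|_{X_T} \le C_2 R^3$ and $\|G(u) - G(v)\|_{X_T} \le C_2 R^2 \|u-v\|_{X_T}$. Hence for $u,v \in B_R$,
\[
\norm{\Psi(u)}_{X_T} \le \tfrac{R}{2} + C_2 R^3, \qquad \norm{\Psi(u)-\Psi(v)}_{X_T} \le C_2 R^2 \norm{u-v}_{X_T}.
\]
Choosing $\rho>0$ so that $4 C_1^2 C_2 \rho^2 \le 1/2$ (e.g.\ $\rho := 1/(4 C_1 \sqrt{C_2})$) guarantees $C_2 R^2 \le 1/2$, which makes $\Psi$ a $\tfrac{1}{2}$-contraction of $B_R$ into itself. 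The Banach fixed-point theorem then produces a unique $u \in B_R$ with $\Psi(u)=u$, giving the mild solution together with the bound $\|u\|_{X_T} < 2C_1 \|u_0\|_{\BMO_{T^{1/4}}}$.

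For the $\overline{\VMO}$ case I would use the second part of Lemma \ref{la:Sta}: given $u_0 \in \overline{\VMO}$, pick $T_0>0$ such that $\|S(\cdot)u_0\|_{X_{T_0}} < \rho/(2C_1)$ (possibly after adjusting $\rho$), and then run the same contraction argument in $X_{T_0}$ with radius $R := 2 \|S(\cdot)u_0\|_{X_{T_0}}$. Part (ii) is the same argument with $T=\infty$, using that $\BMO = \BMO_{\infty}$ by Remark \ref{rem:bmo}, so Lemma \ref{la:Sta} applies with norm $\|u_0\|_{\BMO}$ throughout. There is no essential obstacle here; the only delicate point is bookkeeping the constants so that a single threshold $\rho$ works in all three scenarios, which is handled by just taking $\rho$ to be the minimum of the finitely many admissible thresholds produced in each case.
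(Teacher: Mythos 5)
Your argument is correct and follows essentially the same route as the paper's: both run a Banach fixed-point / Picard iteration in the ball of radius $2C_1\norm{u_0}_{\BMO_{T^{1/4}}}$ in $X_T$, using Lemma~\ref{la:Sta} for the linear term and Lemma~\ref{la:G} for the Duhamel term, with the same smallness threshold $4C_1^2C_2\rho^2<\tfrac{1}{2}$. The only cosmetic differences are that the paper writes out the Picard iterates $u_{j+1}=S(\cdot)u_0+G(u_j)$ explicitly rather than invoking the contraction mapping theorem on a closed ball, and that you should rename your solution map, since $\Psi$ is already reserved in Section~4 for the trilinear operator.
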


\begin{proof}
We define
\begin{equation}\label{eq:FP}
u_1 = S(t)u_0, \quad u_{j+1} = u_1 + G(u_j)
\end{equation}
and choose $\rho>0$, such that $4C_1^2 C_2 \rho^2 < \frac{1}{2}$. For $T \in (0,+\infty]$, if $\|u_0\|_{\BMO_{T^{1/4}}} < \rho$, we have
\begin{equation}\label{eq:delta_u0}
\|u_1\|_{ X_T} < \frac{\delta}{2}
\end{equation}
for $\delta:=2C_1 \rho$.
Note that if $u_0 \in \overline{\VMO}$, according to (\ref{eq:v0vmo}) in Lemma \ref{la:Sta} there exists a $T_0>0$ such that \eqref{eq:delta_u0} holds for $T=T_0$.

We prove by induction that
\[
\|u_j\|_{ X_T} < 2C_1 \|u_0\|_{\BMO_{T^{1/4}}} \quad \text{ for all } j \in \N.
\]
This is true for $j=1$ since Lemma \ref{la:Sta}. For the induction step, it follows by (\ref{eq:G1}) and our choice of $\delta$ that
\[
\begin{aligned}
\|u_{j+1}\|_{ X_T} 
\leq C_1 \|u_0\|_{\BMO_{T^{1/4}}} + C_2 \|u_j\|_{ X_T}^3
\leq 2C_1 \|u_0\|_{\BMO_{T^{1/4}}}.
\end{aligned}
\]
Therefore we have, for all $j \in \N_0$,
\[
\|u_j\|_{ X_T} < 2C_1 \|u_0\|_{\BMO_{T^{1/4}}} < \delta.
\]
(\ref{eq:G2}) implies
\[
 \|G(u_{j+1}) - G(u_j)\|_{ X_T}
\leq C_2 \delta^2 \|u_{j+1} - u_j\|_{ X_T} 
 < \frac{1}{2} \|u_{j+1} - u_j\|_{ X_T}
\]
and hence
\[
\| u_{j+1}-u_j\|_{ X_T} = \| G(u_j)-G(u_{j-1})\|_{ X_T} 
\leq \frac{1}{2} \| u_j - u_{j-1}\|_{ X_T} \leq \cdots \leq 2^{-j} \| u_1 -u_0\|_{ X_T}.
\]
For $k>i>0$ we have
\[
\begin{aligned}
\|u_k-u_i\|_{ X_T} & \leq & \sum_{j=i}^{k-1} \| u_{j+1}-u_j \|_{ X_T} 
\leq \sum_{j=i}^{k-1} 2^{-j} \| u_1 -u_0\|_{ X_T}
\leq C \delta 2^{-i}.
\end{aligned}
\]
Therefore $(u_i)_{i \in \N}$ is a Cauchy sequence in $ X_T$ and converges to a mild solution $u \in X_T$ of (\ref{eq:main}) and
\[
\|u\|_{ X_T} < 2C_1 \|u_0\|_{\BMO_{T^{1/4}}}.
\]
For $a \in \overline{\VMO}(\R^n)$, we obtain by \eqref{eq:G1} that
\[
\lim_{t \searrow 0}  \|G(u)\|_{X_t}
\leq C \lim_{t \searrow 0}\|u\|_{ X_t}^3 
\leq C \lim_{t \searrow 0} \|u_0\|_{\BMO_{t^{1/4}}}^3 = 0
\]
and therefore  $\lim_{t \searrow 0}\|u\|_{ X_t}=0$.
\end{proof}

The above argument only guarantees the uniqueness of mild solutions that are sufficiently small in $X_T$. Now we study the uniqueness of general mild solutions in $X_{0,T}$.
A straightforward calculation shows the following result similar to Lemma 7 in \cite{auscher2004stability}.
\begin{lemma}\label{la:Nshift}
Let $0<t<t'<+\infty$ and $\tilde{u}=u(t+\cdot)$, defined on $(0,t'-t) \times \R^n$. Then
\[
N_{k,c,t'-t}(\tilde{u}) \le \bra*{\ln\frac{t'}{t}}^{\frac{k}{4}} N_{k,\infty}(u)
\quad \text{and}\quad
N_{k,\infty,t'-t}(\tilde{u}) \le \bra*{1-\frac{t}{t'}}^{\frac{k}{4}} N_{k,\infty}(u)
\]
for $k=1,2$.
\end{lemma}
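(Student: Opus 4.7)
The plan is to verify both bounds by a direct reduction to the definitions, using the pointwise control $\|\nabla^k u(\cdot,\sigma)\|_{L^\infty} \le \sigma^{-k/4} N_{k,\infty}(u)$ that is built into the definition of $N_{k,\infty}$.

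First I would handle the $L^\infty$ bound. By definition,
\[
N_{k,\infty,t'-t}(\tilde u) = \sup_{0<s\le t'-t} s^{k/4} \|\nabla^k u(\cdot,t+s)\|_{L^\infty}.
\]
Inserting $\|\nabla^k u(\cdot,t+s)\|_{L^\infty} \le (t+s)^{-k/4} N_{k,\infty}(u)$ gives a supremum of $(s/(t+s))^{k/4}$. Since $s\mapsto s/(t+s)$ is monotone increasing on $(0,\infty)$, the supremum is attained at $s=t'-t$ and equals $1-t/t'$, yielding the stated inequality.

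For the Carleson-type quantity, fix $x\in\R^n$ and $0<r\le(t'-t)^{1/4}$ and change variables $\sigma = t+s$ to write
\[
\int_0^{r^4} \fint_{B(x,r)} |\nabla^k u(y,t+s)|^{4/k} \dif y \dif s
= \int_t^{t+r^4} \fint_{B(x,r)} |\nabla^k u(y,\sigma)|^{4/k} \dif y \dif\sigma.
\]
Then I would use the pointwise bound $|\nabla^k u(y,\sigma)|^{4/k} \le \sigma^{-1} N_{k,\infty}(u)^{4/k}$ inside the integrand, which removes the spatial average and leaves
\[
\int_t^{t+r^4} \sigma^{-1}\dif\sigma = \ln\frac{t+r^4}{t} \le \ln\frac{t'}{t},
\]
since $r^4\le t'-t$. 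Raising both sides to the power $k/4$ and taking the supremum over $x$ and $r$ produces the claimed estimate $N_{k,c,t'-t}(\tilde u) \le (\ln(t'/t))^{k/4}\,N_{k,\infty}(u)$.

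There is no genuine obstacle here; the only small thing to check is that the monotonicity of $s/(t+s)$ indeed places the supremum at the right endpoint, and that the crude $L^\infty$-in-space estimate is sharp enough to give these particular factors — which it is, since both right-hand sides only reflect the time integration.
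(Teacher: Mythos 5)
Your proof is correct, and it is precisely the ``straightforward calculation'' the paper alludes to without carrying out: bound $\|\nabla^k u(\cdot,\sigma)\|_{L^\infty}$ by $\sigma^{-k/4}N_{k,\infty}(u)$, then for the sup-norm quantity use monotonicity of $s/(t+s)$, and for the Carleson quantity shift time and integrate $\sigma^{-1}$. Nothing to add.
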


We apply this lemma to prove uniqueness of mild solutions in $X_{0,T}$.
\begin{proposition}\label{prop:uniqueness}
Let $T>0$. Suppose $u,v \in X_{0,T}$ are mild solutions to \eqref{eq:main} for identical initial conditions $u_0 \in\BMO$, then $u$ and $v$ coincide in $X_{0,T}$.
\end{proposition}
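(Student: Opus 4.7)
The plan is to combine a local contraction argument near $t=0$ with a time-shift bootstrap to propagate uniqueness to the full interval. I would set $w := u - v$; since $u$ and $v$ share the initial datum $u_0$, Duhamel's formula yields
\[
w(t) = G(u)(t) - G(v)(t) \quad \text{for } t \in (0, T).
\]
The obstacle to invoking \eqref{eq:G2} directly on $(0,T)$ is that $u$ and $v$ need not be small in $X_T$; but the defining property of $X_{0,T}$ makes them small on short initial intervals, and Lemma \ref{la:Nshift} will transfer this smallness to later times after a shift.

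First I would establish uniqueness on a short initial interval. Using $u, v \in X_{0,T}$, I choose $t_0 \in (0,T)$ so small that $K_0 := \max\{\norm{u}_{X_{t_0}}, \norm{v}_{X_{t_0}}\}$ satisfies $C_2 K_0^2 < \tfrac{1}{2}$. Applying Lemma \ref{la:G} to the pair $(u,v)$ then gives
\[
\norm{w}_{X_{t_0}} = \norm{G(u) - G(v)}_{X_{t_0}} \le C_2 K_0^2 \norm{w}_{X_{t_0}} < \tfrac{1}{2} \norm{w}_{X_{t_0}}.
\]
Because $\norm{w}_{X_{t_0}} \le \norm{u}_{X_{t_0}} + \norm{v}_{X_{t_0}} < \infty$, this forces $\norm{w}_{X_{t_0}} = 0$, so $u \equiv v$ on $\R^n \times (0, t_0)$ in the sense of $X_{t_0}$ (i.e.\ modulo constants).

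Next I would bootstrap to the full interval. Set $t^* := \sup\{s \in (0, T] : u \equiv v \text{ on } (0, s)\}$, so $t^* \ge t_0 > 0$; arguing by contradiction assume $t^* < T$, and define $\tilde u(s) := u(t^* + s)$, $\tilde v(s) := v(t^* + s)$ for $s \in [0, T - t^*]$. Splitting the Duhamel integral at $t = t^*$ and using the semigroup identity $S(t^* + s) = S(s) S(t^*)$ one checks that $\tilde u, \tilde v$ are mild solutions of \eqref{eq:main} with common initial datum $u(t^*) = v(t^*)$; this last equality follows from the identity already established on $(0, t^*)$ together with Duhamel's formula at time $t^*$. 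Lemma \ref{la:Nshift} applied with $t = t^*$, $t' = t^* + s$ gives, for $k = 1, 2$,
\[
N_{k,c,s}(\tilde u) \le \bra*{\ln \tfrac{t^* + s}{t^*}}^{k/4} N_{k,\infty,T}(u), \quad N_{k,\infty,s}(\tilde u) \le \bra*{\tfrac{s}{t^* + s}}^{k/4} N_{k,\infty,T}(u),
\]
and analogous bounds for $\tilde v$. Both right-hand sides vanish as $s \to 0^+$, so $\tilde u, \tilde v \in X_{0, T - t^*}$, and the first step applies to $(\tilde u, \tilde v)$ producing some $s_0 > 0$ with $\tilde u \equiv \tilde v$ on $(0, s_0)$, i.e.\ $u \equiv v$ on $(t^*, t^* + s_0)$, contradicting the maximality of $t^*$. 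Hence $t^* = T$.

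The step I expect to require the most care is the verification that the time-shifted functions $\tilde u, \tilde v$ solve \eqref{eq:main} in the mild sense on $[0, T - t^*]$ with initial datum $u(t^*)$: this is the standard semigroup-identity manipulation on the Duhamel integral, but it implicitly uses a pointwise-in-time interpretation of Duhamel's formula, which is afforded by the $N_{k,\infty,T}$ components of the $X_T$-norm controlling the spatial derivatives of the integrand uniformly in $t$. The remainder is a quantitative repetition of the fixed-point estimates already proved in Lemma \ref{la:G} and Proposition \ref{prop:existence}, combined with the time-shift decay furnished by Lemma \ref{la:Nshift}.
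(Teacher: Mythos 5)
Your proof is correct and takes essentially the same approach as the paper: both rest on Lemma \ref{la:Nshift} to make the time-shifted solutions small in the $X$-norm and then invoke the contraction estimate \eqref{eq:G2} from Lemma \ref{la:G} to conclude coincidence on short intervals. The only difference is presentational — the paper constructs an explicit finite covering of $(0,T)$ by overlapping intervals and iterates, while you package the same bootstrap as a supremum-plus-contradiction argument; these are equivalent here, since Lemma \ref{la:Nshift} guarantees the admissible step size is proportional to $t^*$, so the supremum genuinely reaches $T$.
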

\begin{proof}
Let $\delta>0$ be a fixed constant, whose value will be decided later. It follows from Lemma \ref{la:Nshift} that, there exist $J \in \N$ and $J+1$ overlapping intervals $I_j = (t_j, t_j')$ covering $(0,T)$, with $t_0=0$, $t_J' = T$ and $t_j < t_{j-1}'$ for any $1 \le j \le J$, such that
\[
\norm{\tilde{u}_j} _{X_{t_j'-t_j}}, \norm{\tilde{v}_j} _{X_{t_j'-t_j}} \le \delta \quad\text{for any } 0 \le j \le J
\]
for
\[
\tilde{u}_j(t)=u(t_j + t), \quad \tilde{v}_j(t)=v(t_j + t) \quad t \in (0,t_j'-t_j).
\]
 Since
\[
u(t)=S(t-t_j) u(t_j) + \int_{0}^{t-t_j} S(t-t_j-s) \nabla \cdot F(\nabla u(t_j+s)) \dif s \quad\text{for}\quad t \in  (t_j, t_j'),
\]
we have
\[
\tilde{u}_j(t)=S(t)u(t_j)+G(\tilde{u}_j)(t) \quad\text{for}\quad t \in  (0, t_j'-t_j).
\]
Set $\tilde{w}_0:=\tilde{u}_0-\tilde{v}_0$. Since $u(t_0)=v(t_0)=u_0$, it follows by \eqref{eq:G2} that
\[
\norm{\tilde{w}_0} _{X_{t_0'-t_0}}=\norm{G(\tilde{u}_0)-G(\tilde{v}_0)} _{X_{t_0'-t_0}} \le C_2 \delta^2 \norm{\tilde{w}_0} _{X_{t_0'-t_0}}.
\]
Taking small enough $\delta>0$ so that $C\delta^2 \le \frac{1}{2}$, we deduce 
\[
\norm{\tilde{w}_0} _{X_{t_0'-t_0}} \le \frac{1}{2} \norm{\tilde{w}_0} _{X_{t_0'-t_0}},
\]
so that $u$ and $v$ coincide on $\R\times I_0$, in particular $u(t_1)=v(t_1)$. Repeating this process inductively for $ j =1, \cdots, J$, the claim follows.
\end{proof}
 
\subsection{Higher regularity}

If in addition $F \in C^\infty(\R^n;\R^n)$, then the solution $u$ obtained in Theorem \ref{thm:existence} satisfies
\[
N_{k,\infty,T}(u) = \sup_{k \in \N} \, t^{\frac{k}{4}} \norm{\nabla^k u}_{L^\infty(\R^n)}<\infty \quad\text{for any}\quad k \ge 1.
\]
The proof follows from an upgrade of the fixed point argument above, cf. \cite{germain2007regularity}
for Navier-Stokes. In the following we outline the main idea and give a sketch of proof.

For $\ell \in \N$ and $\ell \ge 2$ we define the space $X_T^\ell$ of functions $u:\R^n \times [0,T] \to \R$ such that
\[
\norm{u}_{X_T^\ell}= \sum_{k=1}^\ell N_{k,\infty,T}(u) + N_{k,c,T}(u) < \infty
\]

It follows by similar arguments as in the proof Lemma \ref{la:v0} that
\[
N_{k,\infty,T}(S(\cdot)u_0), N_{k,c,T}(S(\cdot)u_0) \le C \norm{u_0}_{\BMO_{T^{1/4}}} \quad\text{for all}\quad k \le \ell.
\]
 Following the idea in \cite{germain2007regularity}, one can adapt the argument in the proof of Lemma \ref{la:G} to prove that
\[
N_{\ell+1,\infty,T}(G(u)) , N_{\ell+1,c,T}(G(u)) \le C \norm{u}_{X_T^\ell}^3 \quad\text{for all}\quad \ell \ge 2.
\]
By induction we prove that the sequence $(u_j)_{j \in \N}$ defined in \eqref{eq:FP} satisfies
\[
N_{\ell,\infty,T}(u_j) +N_{\ell,c,T}(u_j)  \le C_\ell
\]
where $C_\ell >0$ depending on $\ell$, but independent of $j$. This implies that $(u_j)_{j \in \N}$ is uniformly bounded in $X_T^\ell$, hence its limit $u$ is in $X_T^\ell$.

\section{Stability of global solutions}
\subsection{Main idea}
This section is devoted to the proof of Theorem \ref{thm:stability}, i.e., asymptotics and  stability of global solutions in the case of cubic nonlinearities $F(\xi)=\pm \abs{\xi}^2 \xi$.  Key is a linearization principle by Auscher, Dubois and Tchamitchian's in \cite{auscher2004stability} introduced in the context 
of global solutions to the Navier-Stokes equation in $\R^3$ to initial data in $\VMO^{-1}$. An essential feature of their method is the cancellation property of the trilinear term specific to Navier-Stokes. The trilinear term $\nabla \cdot F(\nabla u) $ in our equation does not possess any similar cancellation property. However, we will prove that, under certain conditions, global solutions of \eqref{eq:main} to initial data in $\VMO$ possess some asymptotic behaviour, which is the key to the stability of the solutions. Depending on the property of energy functional, we distinguish the following two cases:
\begin{itemize}
\item[(a)] For the coercive case $F(\xi)=\abs{\xi}^2 \xi$, we derive an energy-type inequality and use interpolation to deduce the asymptotics of global solutions of \eqref{eq:main} to initial data in $\VMO$ without a size restriction. 
\item[(b)] For the non-coercive case $F(\xi)=-\abs{\xi}^2 \xi$, though there is no similar energy-type inequality, we use a fixed-point argument to obtain the asymptotic result of global solutions, for initial data small enough in $\VMO$.
\end{itemize}

We define the trilinear operator
\[
\Psi(f,g,h)(t) = \int_0^t S(t-s) \psi(f(s),g(s),h(s)) \dif s
\]
where
\[
\psi(f,g,h):= \nabla \cdot  \bra*{ (\nabla f \cdot \nabla g ) \nabla h} =   (\nabla f \cdot \nabla g ) \lap h + \nabla f \cdot \nabla^2 g \cdot \nabla h + \nabla g \cdot \nabla^2 f \cdot \nabla h
\]
and we denote
\[
\nabla f \cdot \nabla^2 g \cdot \nabla h = \sum_{i=1}^n \sum_{k=1}^n \partial_k f \cdot \partial_i \partial_k g \cdot \partial_i h.
\]
Then mild solutions of \eqref{eq:main} for $F(\xi)=\pm \abs{\xi}^2 \xi$ are characterized by
\[
u(t)=S(t)u_0 \pm \Psi(u,u,u)(t).
\]
The strategy of proof is to show that for any initial data $v_0 \in \BMO$ near $u_0$, \eqref{eq:main} has a solution $v \in X$. This is equivalent to solving the following equation for $w=u-v$ to small initial data $w_0:= u_0 - v_0$ 
\begin{equation}\label{eq:w1}
w(t) = S(t)w_0 - \mathcal{L}_{u,u} w(t)+ \mathcal{B}_u(w,w)(t)- \Psi(w,w,w)(t)
\end{equation}
in terms of linear and bilinear operators 
\[
\mathcal{L}_{a,b} f := \pm (\Psi(f,a,b) + \Psi(a,b,f) + \Psi(b,f,a))
\]
and 
\[
\mathcal{B}_a(f,f) := \mathcal{L}_{a,f} f,
\]
respectively, acting on $f \in X$ for $a,b \in X$.
Making use of the asymptotic behaviour of $u$, we prove that the operator $I+ \mathcal{L}_{u,u}$ is invertible on $X$. Then the equation \eqref{eq:w1} is equivalent to
\[
w(t) = (I+ \mathcal{L}_{u,u})^{-1} S(t)w_0 + (I+ \mathcal{L}_{u,u})^{-1} \pra*{\mathcal{B}_u(w,w)(t)- \Psi(w,w,w)(t)},
\]
which is solvable in $X$ for $w_0$ small enough in $\BMO$ according to the following lemma.
\begin{lemma}\label{la:NL}
Let $X$ be a Banach space, $\mathcal{L}$ a linear operator, $\mathcal{B}$ a bilinear operator and $\Psi$ a trilinear operator, continuous on $X$ with bounded operator norm. Suppose $I+\mathcal{L}$ is invertible. Then for $z \in X$ such that
\[
\norm{(I+\mathcal{L})^{-1}z}_X < \frac{1}{4(\norm{\mathcal{B}}+\norm{\Psi}) \norm{(I+\mathcal{L})^{-1}}},
\]
there is a solution $w \in X$ to the equation
\begin{equation}\label{eq:w}
 w = z - \mathcal{L}w + \mathcal{B}(w,w) - \Psi(w,w,w).
\end{equation}
Furthermore this solution satisfies
\[
\norm{w}_X \le C_z \norm{z}_X,
\]
where $C_z>0$ is a constant depending on $z$.
\end{lemma}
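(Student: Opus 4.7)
The plan is to recast \eqref{eq:w} as a fixed-point equation and apply Banach's contraction principle. Since $I+\mathcal{L}$ is invertible on $X$, \eqref{eq:w} is equivalent to $w = T(w)$ with
\[
T(w) := (I+\mathcal{L})^{-1}\bigl[z + \mathcal{B}(w,w) - \Psi(w,w,w)\bigr].
\]
Abbreviating $M := \|(I+\mathcal{L})^{-1}\|$, $A := \|\mathcal{B}\|+\|\Psi\|$ and $\rho := \|(I+\mathcal{L})^{-1}z\|_X$, so that the smallness hypothesis reads $4MA\rho<1$, I would seek a fixed point of $T$ in a closed ball $\overline{B}(0,\eta)\subset X$ of radius $\eta$ to be chosen.

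The two required bounds are routine consequences of (multi)linearity. On one hand,
\[
\|T(w)\|_X \le \rho + M\|\mathcal{B}\|\eta^2 + M\|\Psi\|\eta^3 \quad \text{for } w\in\overline{B}(0,\eta).
\]
On the other hand, using the telescoping identities
\begin{align*}
\mathcal{B}(w_1,w_1)-\mathcal{B}(w_2,w_2) &= \mathcal{B}(w_1-w_2,w_1) + \mathcal{B}(w_2,w_1-w_2),\\
\Psi(w_1,w_1,w_1)-\Psi(w_2,w_2,w_2) &= \Psi(w_1-w_2,w_1,w_1) + \Psi(w_2,w_1-w_2,w_1) + \Psi(w_2,w_2,w_1-w_2),
\end{align*}
one obtains
\[
\|T(w_1)-T(w_2)\|_X \le M\bigl(2\|\mathcal{B}\|\eta + 3\|\Psi\|\eta^2\bigr)\|w_1-w_2\|_X
\]
for $w_1,w_2\in\overline{B}(0,\eta)$.

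The main obstacle is to choose $\eta$ so that both the self-mapping and the strict contraction conditions are simultaneously enforced by the single smallness hypothesis. My plan is to bound the cubic term by $M\|\Psi\|\eta^3\le MA\eta^2$ on the relevant range (so $\eta \le 1$, which is verified a posteriori), reducing the self-mapping condition to the quadratic inequality $MA\eta^2 - \eta + \rho \le 0$. This admits the smaller root $\eta_- = (1-\sqrt{1-4MA\rho})/(2MA)$ precisely when $4MA\rho<1$, and one checks $\eta_-\le 2\rho$. Taking $\eta=\eta_-$, the defining equation gives $2MA\eta_-<1$, which combined with $\|\mathcal{B}\|,\|\Psi\|\le A$ forces the Lipschitz constant $M(2\|\mathcal{B}\|\eta_-+3\|\Psi\|\eta_-^2)$ to be strictly less than $1$; hence $T$ is a strict contraction on $\overline{B}(0,\eta_-)$.

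Banach's theorem then produces a unique fixed point $w\in\overline{B}(0,\eta_-)$ of $T$, which solves \eqref{eq:w}. The chain of inequalities $\|w\|_X \le \eta_- \le 2\rho \le 2M\|z\|_X$ yields the desired estimate $\|w\|_X \le C_z\|z\|_X$, in fact with constant $C_z = 2M$ independent of $z$.
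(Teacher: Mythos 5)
Your argument follows the paper's approach: reduce via $(I+\mathcal{L})^{-1}$ and run a fixed-point argument for the remaining bilinear and trilinear terms (the paper iterates and sums a telescoping series; you invoke Banach's theorem, which is the same in essence). The gap lies in the step where you bound $M\|\Psi\|\eta^3$ by $MA\eta^2$, justified by the parenthetical ``$\eta \le 1$, which is verified a posteriori.'' The hypothesis does not secure this: from $\eta_- = \frac{2\rho}{1+\sqrt{1-4MA\rho}}$ one only gets $\eta_- < \frac{1}{2MA}$, which exceeds $1$ whenever $MA < \frac{1}{2}$. Concretely, take $\mathcal{L} = \mathcal{B} = 0$, $M = 1$, $\|\Psi\| = \frac{1}{10}$, $\rho = 2$; then $\rho < \frac{1}{4MA} = \frac{5}{2}$, yet $\eta_- \approx 2.76 > 1$, and there is no closed ball on which $T$ is simultaneously self-mapping and contractive because the cubic term dominates. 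To repair the argument one can impose the additional smallness condition $\rho \le \frac{1}{2}$ (which forces $\eta_- \le 2\rho \le 1$), or keep the full cubic inequality $\rho + M\|\mathcal{B}\|\eta^2 + M\|\Psi\|\eta^3 \le \eta$ and strengthen the hypothesis so as to control the quadratic and cubic coefficients separately.

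In fairness, the paper's own sketch has the same defect: the assertion that uniform boundedness of the Picard iterates is ``straightforward'' under $\|\tilde z\|\le\frac{(1-\|\mathcal{L}\|)^2}{4(\|B\|+\|\Psi\|)}$ fails for the same data, where the successive upper bounds on $\|\tilde w_n\|$ blow up. In the application $z = S(\cdot)w_0$ is taken arbitrarily small, so the issue never arises there; but the lemma as stated, and your proof of it, both over-reach in handling the cubic term.
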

\begin{proof}
First we assume $\norm{\mathcal{L}}<1$ and define a sequence
\[
w_1 = \tilde{z}, \quad \tilde{w}_{n+1} = \tilde{w}_1 + \mathcal{L} \tilde{w}_n+B( \tilde{w}_n, \tilde{w}_n)+\Psi( \tilde{w}_n, \tilde{w}_n, \tilde{w}_n).
\]
It is straightforward to prove that, if $\norm{\tilde{z}}_X \le \frac{(1-\norm{\mathcal{L}})^2}{4(\norm{B}+\norm{\Psi})}$, then the sequence $\norm{ \tilde{w}_n}_X$ is uniformly bounded and the telescopic series $\sum \norm{ \tilde{w}_{n+1}- \tilde{w}_n}_X$ converges. This gives the solution $ \tilde{w}=\sum( \tilde{w}_{n+1}- \tilde{w}_n)$ to \eqref{eq:w}.

In the general case, when $I+\mathcal{L}$ is invertible, equation \eqref{eq:w} is equivalent to
\[
w=(I+\mathcal{L})^{-1}z+(I+\mathcal{L})^{-1}B(w,w)-(I+\mathcal{L})^{-1}\Psi(w,w,w).
\]
Applying above result on $\mathcal{L}=0$ and $\tilde{z}=(I+\mathcal{L})^{-1}z$ yields solution of \eqref{eq:w}.
\end{proof}

\subsection{The linear operator $\mathcal{L}_{a,b}$}
In preparation for the proof of stability, we first prove boundedness of trilinear operator $\Psi$ in $X_T$. 
\bigskip
\begin{lemma}\label{la:Lcontinuity}
Let $T \in (0,+\infty]$.
\begin{enumerate}[(i)] 
\item $\Psi$ is well-defined and continuous on $X_T \times X_T \times X_T$ with
\[
 \norm{\Psi(f,g,h)}_{X_T} \le C \norm{f}_{X_T} \norm{g}_{X_T} \norm{h}_{X_T}.
\]
\item Let $a,b \in X_T$. Then $\mathcal{L}_{a,b}$ is continuous on $X_T$ with
\[
 \norm{\mathcal{L}_{a,b} f}_{X_T} \le C \norm{a}_{X_T} \norm{b}_{X_T} \norm{f}_{X_T}.
\]
\end{enumerate}
\end{lemma}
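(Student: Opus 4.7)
The plan is to deduce (ii) immediately from (i): since
$$\mathcal{L}_{a,b} f = \pm\bra*{\Psi(f,a,b) + \Psi(a,b,f) + \Psi(b,f,a)},$$
the triangle inequality together with the trilinear bound for $\Psi$ yields
$\norm{\mathcal{L}_{a,b} f}_{X_T} \le 3C \norm{a}_{X_T} \norm{b}_{X_T} \norm{f}_{X_T}$. So the real work is in (i), and here I would essentially re-run the argument of Lemma \ref{la:G}, carrying three distinct functions $f,g,h$ in place of the single function $u$. Observe that $\psi(u,u,u)$ coincides with $\nabla \cdot F(\nabla u)$ for $F(\xi)=|\xi|^2\xi$, so the operator $G$ of Section~3.2 is exactly $\Psi(\cdot,\cdot,\cdot)$ on the diagonal; the estimates there already contain all the key inequalities.

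By scaling $\Psi(f_\lambda, g_\lambda, h_\lambda)(x,t) = \Psi(f,g,h)(\lambda x, \lambda^4 t)$ and spatial translation, it suffices to bound $|\nabla^k \Psi(f,g,h)(0,1)|$ and $\|\nabla^k \Psi(f,g,h)\|_{L^{4/k}(B_1 \times (0,1))}^{k/4}$ under the normalization $T\ge 1$, for $k=1,2$. For the pointwise $N_{k,\infty,T}$ bound, I would move the divergence in $\psi$ onto the kernel and write
$$\nabla^k \Psi(f,g,h)(0,1) = -\int_0^1 \int_{\R^n} \nabla^{k+1} b(y, 1-s)\, (\nabla f \cdot \nabla g)\, \nabla h(y,s) \dif y \dif s,$$
then split the integration region into the three pieces used in Lemma~\ref{la:G}: (a) $s\in[1/2,1]$, controlled by $\|\nabla^{k+1}b(\cdot,1-s)\|_{L^1}\lesssim (1-s)^{-(k+1)/4}$ and the three $L^\infty$-bounds $\|\nabla f(s)\|_{L^\infty}\|\nabla g(s)\|_{L^\infty}\|\nabla h(s)\|_{L^\infty}\lesssim s^{-3/4}\|f\|_{X_T}\|g\|_{X_T}\|h\|_{X_T}$; (b) $s\in[0,1/2]$, $y\in B_1$, where the kernel is uniformly bounded and Hölder with three $L^4$ factors against $\nabla f,\nabla g,\nabla h$ closes via $N_{1,c,T}$; and (c) $s\in[0,1/2]$, $y\in\R^n\setminus B_1$, where the decay bound $|\nabla^{k+1} b|\le C|y|^{-(n+1)}$ together with the dyadic decomposition $\R^n\setminus B_1 = \bigsqcup_j (B_{j+1}\setminus B_j)$ and the convergent series $\sum j^{-2}$ reduce matters again to a uniform supremum of the same local $L^4$ quantity.

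For the $N_{k,c,T}$ bound I would mimic the energy argument of Wang at the end of Lemma \ref{la:G}. Setting $\Phi=\Psi(f,g,h)$, so that $\partial_t\Phi + \Delta^2\Phi = \psi(f,g,h)$ with $\Phi(0)=0$, I would multiply by $\eta^4\Phi$ for a fixed cutoff $\eta\in C_0^\infty(B_2)$ with $\eta=1$ on $B_1$, integrate over $\R^n\times[0,1]$, and integrate by parts in the $\nabla^2\Phi\cdot\nabla^2(\Phi\eta^4)$ term. This yields a localized $L^2$-bound for $\nabla^2\Phi$ on $B_1\times[0,1]$, modulo error terms that are absorbed using the pointwise estimates just obtained, plus the crucial trilinear $L^1$-bound
$$\|\psi(f,g,h)\|_{L^1(B_2\times[0,1])} \le C\,\|f\|_{X_T}\|g\|_{X_T}\|h\|_{X_T},$$
which follows by applying Hölder with exponents $(2,4,4)$ to each of the three terms in the expansion $\psi = (\nabla f\cdot\nabla g)\Delta h + \nabla f\cdot\nabla^2 g\cdot\nabla h + \nabla g\cdot\nabla^2 f\cdot\nabla h$, placing the $L^2$-factor on the term with two derivatives. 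The corresponding $L^4$-bound for $\nabla\Phi$ on $B_1\times[0,1]$ follows from the Gagliardo--Nirenberg interpolation $\|\nabla(\eta^2\Phi)\|_{L^4}^4 \le C\|\eta^2\Phi\|_{L^\infty}^2\|\nabla^2(\eta^2\Phi)\|_{L^2}^2$ exactly as in Lemma~\ref{la:G}.

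The main challenge is not conceptual but combinatorial: each of the three terms in the expansion of $\psi$ distributes the second derivative onto a specific factor, so one must check that each term admits a Hölder splitting with the $L^2$-exponent sitting on that same factor, and that this choice is consistent across all three regions in the pointwise estimate and the energy estimate. Since the role played by each argument in $\psi(f,g,h)$ is controlled by a single $X_T$ norm, no new analytic input is required.
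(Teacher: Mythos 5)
Your proposal is correct, and the bulk of it matches the paper's proof: deducing (ii) from (i) by the triangle inequality, and the Carleson-norm bound for $\Psi(f,g,h)$ via Wang's localized energy method (multiply $\partial_t\Phi + \Delta^2\Phi = \psi(f,g,h)$ by $\eta^4\Phi$, use the $L^1$-bound $\norm{\psi(f,g,h)}_{L^1(B_2\times[0,1])}\lesssim\norm{f}_{X_T}\norm{g}_{X_T}\norm{h}_{X_T}$ from H\"older with exponents $(2,4,4)$, and close with Gagliardo--Nirenberg). The one place you diverge from the paper is the pointwise estimate $N_{k,\infty,T}(\Psi(f,g,h))$: you rerun the full three-region kernel decomposition of Lemma \ref{la:G} (late times, near-ball via H\"older against the local $L^4$ Carleson quantities, far-field via dyadic annuli and $\sum j^{-2}$). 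That argument is valid but overkill. Once the divergence is moved onto the kernel, the integrand contains only first derivatives of $f,g,h$, each of which costs $s^{-1/4}$ in $L^\infty$; the total singularity $s^{-3/4}$ is integrable at $s=0$, so the paper estimates in one line
\[
\norm{\nabla^k\Psi(f,g,h)(t)}_{L^\infty}
\le C\int_0^t (t-s)^{-\frac{k+1}{4}}\,\norm{\nabla f(s)}_{L^\infty}\norm{\nabla g(s)}_{L^\infty}\norm{\nabla h(s)}_{L^\infty}\,\dif s
\le C\, t^{-\frac{k}{4}}\norm{f}_{X_T}\norm{g}_{X_T}\norm{h}_{X_T},
\]
using only the $L^\infty$-to-$L^\infty$ semigroup bound of Lemma \ref{la:St} and the beta integral \eqref{eq:beta}, with no spatial or temporal case split. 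The three-region decomposition was needed in Lemma \ref{la:G} because the integrand there carries a factor $\nabla^2 w$ contributing $s^{-1/2}$, so the combined singularity $s^{-1}$ is non-integrable and one must retreat to the local space-time $L^2/L^4$ quantities near $s=0$; that obstruction vanishes for $\Psi$. Your version still proves the lemma, but the paper's shortcut is cleaner and makes the trilinear structure visible.
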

\begin{proof}
Let $f,g,h \in X_T$. Taking into account the beta integral
\begin{equation}\label{eq:beta}
\int_0^t (t-s)^{-\alpha} s^{-\beta} \dif s = c t^{1-\alpha-\beta} \quad \text{for }\alpha, \beta \in (0,1)
\end{equation}
and \eqref{eq:St}, we obtain for $k=1,2$ that
\[
\begin{aligned}
&\int_0^t \norm{\nabla^k S(t-s) \Psi(f,g,h)(t)}_{L^\infty(\R^n)} \\ 
\le &  \int_0^t (t-s)^{-\frac{k+1}{4}} \bigg(\norm{\nabla f(t)}_{L^\infty}\norm{\nabla g(s)}_{L^\infty}\norm{\nabla h(t)}_{L^\infty}\bigg) \dif s\\
 \le &  \int_0^t (t-s)^{-\frac{k+1}{4}} s^{-\frac{3}{4}} \dif s  N_{1,\infty,T}(f)  N_{1,\infty,T}(g) N_{1,\infty,T}(h) \\
 \le & C t^{-\frac{k}{4}} \norm{f}_{X_T}\norm{g}_{X_T}\norm{h}_{X_T},
\end{aligned}
\]
hence, for the estimate of $\norm{\nabla \Psi(f,g,h)}_{L^4(B(x,r) \times (0,r^4))}$, we can assume $x=0$ and $r=1$ due to the invariance under translation and scaling. We note that $\Psi(f,g,h)$ solves \eqref{eq:heatPhi} for $\phi=\psi(f,g,h)$.
It follows by the H\"older inequality that
\[
\begin{aligned}
\norm{\psi(f,g,h)}_{L^1(B_2 \times [0,1])} 
\le & C \bra{\norm{\nabla f}_{L^4(B_2 \times [0,1])} \norm{\nabla g}_{L^4(B_2 \times [0,1])} \norm{\lap h}_{L^2(B_2 \times [0,1])}\\
&\quad +  \norm{\nabla f}_{L^4(B_2 \times [0,1])} \norm{\nabla^2 g}_{L^2(B_2 \times [0,1])} \norm{\nabla h}_{L^4(B_2 \times [0,1])} \\
&\quad  +  \norm{\nabla^2 f}_{L^2(B_2 \times [0,1])} \norm{\nabla g}_{L^4(B_2 \times [0,1])} \norm{\nabla h}_{L^4(B_2 \times [0,1])}}\\
 \le & C \norm{f}_{X_T} \norm{g}_{X_T} \norm{h}_{X_T}.
\end{aligned}
\]
Using the same argument to prove \eqref{eq:G02} in Lemma \ref{la:G} with help of a smooth cutoff function of $B_1$, we obtain
\[
\norm{\nabla^2 \Psi}_{L^2(B_1 \times [0,1])} \le C \norm{f}_{X_T} \norm{g}_{X_T} \norm{h}_{X_T}
\]
and 
\[
\norm{\nabla \Psi}_{L^4(B_1 \times [0,1])}  \le C \norm{f}_{X_T} \norm{g}_{X_T}\norm{h}_{X_T}.
\]
Hence
\[
\norm{\Psi(f,g,h)}_{X_T} \le C \norm{f}_{X_T} \norm{g}_{X_T} \norm{h}_{X_T}
\]
where $C$ is a universal constant independent of $f,g,h$ and $T$. Thus (i) follows and from this directly (ii).
\end{proof}

The operator $\Psi$ preserves BMO norm in the following sense:
\begin{lemma}\label{la:Lpersist}
Let $a,b,f \in X_T$ for some $T \in (0,+\infty]$. Then $\Psi(a,b,f) \in \BMO$ for any $t \in (0,T)$ and
\[
\norm{\Psi(a,b,f)(t)}_{\BMO} \le C\norm{a}_{X_T} \norm{b}_{X_T} \norm{f}_{X_T}
\]
for some universal constant $C>0$ independent of $a,b,f$ and $t$.
\end{lemma}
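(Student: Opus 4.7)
The plan is to apply the Carleson characterisation of the classical $\BMO$ space (which follows from Lemma \ref{la:BMO} with $R = \infty$; see Remark \ref{rem:bmo}) to the snapshot $v := \Psi(a,b,f)(t_0)$ at a fixed $t_0 \in (0,T)$. This reduces the statement to proving
\[
 \sup_{x \in \R^n, \, r>0} \int_0^{r^4} \fint_{B(x,r)} \tau^{-\frac{1}{2}} \abs{\nabla S(\tau) v(y)}^2 \dif y \dif \tau \le C \norm{a}_{X_T}^2 \norm{b}_{X_T}^2 \norm{f}_{X_T}^2,
\]
with $C$ independent of $t_0$. By translation invariance and the scaling \eqref{eq:scaling}---under which $X_T$, $\BMO$, and the trilinear operator $\Psi$ all transform consistently---it suffices to prove this for $x = 0$ and $r = 1$.

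Next, using the divergence structure $\psi(a,b,f) = \nabla \cdot ((\nabla a \cdot \nabla b) \nabla f)$ and integrating by parts once in the spatial variable, I would represent
\[
\nabla S(\tau) v(y) = \int_0^{t_0}\!\!\int_{\R^n} \nabla^2 b(y-z, \tau + t_0 - s)\,(\nabla a \cdot \nabla b) \nabla f (z,s) \dif z \dif s
\]
(up to signs and index contractions) and estimate it by a space-time partition mirroring the proof of Lemma \ref{la:G}: split the $s$-integral at $t_0/2$ and the $z$-integral into $B(0,2)$ versus its complement, the latter subdivided into unit-width dyadic annuli. On each piece I would apply the kernel bounds \eqref{eq:kernel_abs1}--\eqref{eq:kernel_L1} of Lemma \ref{la:basic}, combined with either the pointwise estimate $\norm{\nabla a(s)}_{L^\infty} \le s^{-1/4} N_{1,\infty,T}(a)$ (when $s$ is away from $t_0$) or the local $L^4$ and $L^2$ Carleson controls $N_{1,c,T}$, $N_{2,c,T}$ (when $s$ is close to $t_0$). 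Cauchy--Schwarz in space-time, followed by the beta integral \eqref{eq:beta}, would close each contribution.

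The main obstacle is uniformity in $t_0$: unlike Lemma \ref{la:G}, where the time horizon was fixed, here $t_0$ may vastly exceed the test length scale $r = 1$. The saving feature is that when $s \le t_0/2$ the kernel argument satisfies $\tau + t_0 - s \ge t_0/2$, so the decay $\abs{\nabla^2 b(\cdot, \tau + t_0 - s)} \lesssim t_0^{-(n+2)/4}$ combines with the integrability of $s^{-3/4}$ to give a contribution that actually \emph{decays} in $t_0$ and is therefore harmless. The remaining window $s \in (t_0/2, t_0)$ is where the singularity concentrates, but after the substitution $\sigma := t_0 - s$ it becomes structurally identical to the singular time integrals already handled in the derivation of \eqref{eq:G02}, with the $t_0$-dependence eliminated. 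This yields the desired estimate with a universal constant $C$.
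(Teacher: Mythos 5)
Your proposal is correct in spirit but substantially more elaborate than what is needed, and it contains a vague step that would require further care. The paper's own proof is far shorter: it uses the embedding $L^\infty \subset \BMO$ and establishes the stronger $L^\infty$ bound
\[
\norm{\Psi(a,b,f)(t)}_{L^\infty} \le C\norm{a}_{X_T}\norm{b}_{X_T}\norm{f}_{X_T}
\]
directly. Writing $\Psi(a,b,f)(t)=\int_0^t S(t-s)\nabla\cdot\bra*{(\nabla a\cdot\nabla b)\nabla f}(s)\dif s$, one applies the $L^\infty\to L^\infty$ smoothing of Lemma~\ref{la:St} with $k=1$, $p=q=\infty$ to gain $(t-s)^{-1/4}$, bounds each gradient factor pointwise by $s^{-1/4}N_{1,\infty,T}$, and closes with the beta integral \eqref{eq:beta} since $\int_0^t(t-s)^{-1/4}s^{-3/4}\dif s$ is uniformly bounded in $t$. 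No Carleson characterisation, no kernel splitting, no decomposition into annuli is needed.

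Regarding your route: the one specific gap is your appeal to ``the local $L^4$ and $L^2$ Carleson controls $N_{1,c,T}$, $N_{2,c,T}$ (when $s$ is close to $t_0$).'' Those seminorms only control $\nabla^k u$ on space--time windows of the form $B(x,r)\times(0,r^4)$ anchored at time zero; they do \emph{not} directly control a late window $B(x,1)\times(t_0/2,t_0)$ when $t_0\gg 1$, so this invocation is not justified as written. What actually saves the estimate near $s=t_0$ is precisely the $L^\infty$ control $\norm{\nabla a(s)}_{L^\infty}\le s^{-1/4}N_{1,\infty,T}(a)$, which for $s\in(t_0/2,t_0)$ already supplies the $t_0^{-1/4}$ decay you want after the $\sigma=t_0-s$ substitution. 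Once you notice that the $N_{k,\infty}$ bounds alone close every case, you are back to the observation that $\Psi(a,b,f)(t_0)$ is uniformly in $L^\infty$, and the whole Carleson-measure machinery (Lemma~\ref{la:BMO}, the annulus decomposition, the treatment of $r\gg 1$) becomes superfluous. I would recommend starting from $L^\infty\subset\BMO$ and keeping the argument short.
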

\begin{proof}
Due to the embedding $L^{\infty} \subset \BMO$, it is enough to show
\begin{equation}\label{eq:Linfty}
\norm{\Psi(a,b,f)(t)}_{L^{\infty}} \le C \norm{a}_{X_T} \norm{b}_{X_T} \norm{f}_{X_T}
\end{equation}
for some uniform constant $C>0$. It follows by \eqref{eq:St}  and \eqref{eq:beta} that
\[
\begin{aligned}
& \norm{\int_0^t S(t-s) \psi(f(s), a(s), b(s)) \dif s}_{L^{\infty}} \le \int_0^t \norm{ \nabla S(t-s) \bra*{ (\nabla f \cdot \nabla a ) \nabla b} }_{L^{\infty}}  \dif s\\
\le &\,  C\int_0^t  (t-s)^{-\frac{1}{4}} s^{-\frac{3}{4}} \bigg[ \bra*{s^{\frac{1}{4}}\norm{\nabla a(s)}_{L^\infty}}\bra*{s^{\frac{1}{4}}\norm{\nabla b(s)}_{L^\infty}} \bra*{s^{\frac{1}{4}}\norm{\nabla f(s)}_{L^p}} \bigg] \dif s \\
\le &\,C  \norm{a}_{X_T} \norm{b}_{X_T} \norm{f}_{X_T}.
\end{aligned}
\]
Therefore we have proved \eqref{eq:Linfty} and thus the claim.
\end{proof}

We need an auxiliary space $L_T^p$ for $T \in (0,+\infty]$ and $p \in [1, +\infty]$ of functions $f$ such that
\[
\norm{f}_{L_T^p} := \sup_{0 <t \le T} \bra*{\norm{f(t)}_{ L^p(\R^n)} + t^{\frac{1}{4}} \norm{\nabla f(t)}_{L^p(\R^n)}+ t^{\frac{1}{2}} \norm{\nabla^2 f(t)}_{L^p(\R^n)}} < +\infty.
\]

\bigskip
\begin{lemma}\label{la:LcontinuityL}
Let $a,b \in X_T$ for some $T \in (0,+\infty]$. Then
\begin{enumerate}[(i)]
\item $\Psi(\cdot, a,b) , \Psi(b, \cdot, a), \Psi(a,b,\cdot) : L_T^p \to L_T^p$ are well-defined and continuous with
\[
 \norm{\Psi(f, a,b)}_{L_T^p}, \norm{\Psi(b,f, a)}_{L_T^p}, \norm{\Psi(b,a, f)}_{L_T^p}  \le C_3 \norm{a}_{X_T} \norm{b}_{X_T} \norm{f}_{L_T^p}
\]
for a uniform constant $C_3>0$.
\item $\mathcal{L}_{a,b}$ is continuous on $L_T^p$ with
\[
 \norm{\mathcal{L}_{a,b} f}_{L_T^p} \le C_4 \norm{a}_{X_T}   \norm{b}_{X_T}  \norm{f}_{L_T^p}
\]
for a uniform constant $C_4>0$.
\end{enumerate}
\end{lemma}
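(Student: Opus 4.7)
The plan is to mimic the proof of Lemma \ref{la:Lcontinuity}, replacing the mixed $L^\infty$/Carleson control from $X_T$ on one of the three factors by pointwise-in-$t$ $L^p$ control coming from $L_T^p$. Part (ii) is then an immediate consequence of (i): since $\mathcal{L}_{a,b}f = \pm(\Psi(f,a,b)+\Psi(a,b,f)+\Psi(b,f,a))$, the triangle inequality together with the three bounds from (i) gives the claim with $C_4 = 3 C_3$.

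For (i) I would first exploit the divergence structure
\[
\psi(g_1,g_2,g_3) = \nabla \cdot H(g_1,g_2,g_3), \qquad H(g_1,g_2,g_3) := (\nabla g_1 \cdot \nabla g_2)\,\nabla g_3,
\]
moving one spatial derivative onto the semigroup, so that for $k \in \{0,1,2\}$
\[
\nabla^k \Psi(g_1,g_2,g_3)(t) = \int_0^t \nabla^{k+1} S(t-s)\, H(g_1,g_2,g_3)(s) \dif s.
\]
The $L^p$-smoothing of Lemma \ref{la:St} applied with $p=q$, at the order $k+1 \le 3$, then yields
\[
\norm{\nabla^k \Psi(g_1,g_2,g_3)(t)}_{L^p} \le C \int_0^t (t-s)^{-\frac{k+1}{4}} \norm{H(g_1,g_2,g_3)(s)}_{L^p} \dif s.
\]

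Next I would estimate $H$ at time $s$ by H\"older's inequality, placing the $f$-factor in $L^p$ and the two $a$- or $b$-factors in $L^\infty$. In each of the three slot configurations required by (i) this gives
\[
\norm{H(\ldots,f,\ldots)(s)}_{L^p} \le \norm{\nabla f(s)}_{L^p}\norm{\nabla a(s)}_{L^\infty}\norm{\nabla b(s)}_{L^\infty} \le C\, s^{-3/4}\norm{f}_{L_T^p}\norm{a}_{X_T}\norm{b}_{X_T}.
\]
Crucially, the divergence structure saves one derivative, so one never has to put $\nabla^2 a$ or $\nabla^2 b$ in $L^\infty$ and incur a non-integrable $s^{-1}$ singularity. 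Inserting this bound into the time integral and invoking the beta identity \eqref{eq:beta} yields
\[
\int_0^t (t-s)^{-\frac{k+1}{4}} s^{-3/4} \dif s = C\, t^{-k/4}, \qquad k = 0,1,2,
\]
so that $t^{k/4}\norm{\nabla^k \Psi(\ldots,f,\ldots)(t)}_{L^p} \le C \norm{a}_{X_T}\norm{b}_{X_T}\norm{f}_{L_T^p}$; summing over $k \in \{0,1,2\}$ produces the claimed bound on $\norm{\Psi(\ldots,f,\ldots)}_{L_T^p}$.

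The only point requiring care is integrability of $(t-s)^{-(k+1)/4} s^{-3/4}$ at both endpoints when $k=2$; both exponents are strictly below $1$ and their sum $3/2$ is strictly below $2$, so the beta integral converges. No genuine obstacle arises --- the heart of the lemma is the scaling bookkeeping that verifies that the divergence structure of $\psi$ and the $L^p\to L^p$ smoothing of $S(t)$ up to order $3$ are compatible with the scalings built into $X_T$ and $L_T^p$.
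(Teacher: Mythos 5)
Your proof is correct and follows essentially the same route as the paper: move the divergence onto the semigroup, apply the $L^p\to L^p$ smoothing of Lemma \ref{la:St} at order $k+1$, estimate $H$ by H\"older with $\nabla f$ in $L^p$ and $\nabla a,\nabla b$ in $L^\infty$, and close with the beta integral \eqref{eq:beta}. The paper's argument is identical, down to the $s^{-3/4}$ factor and the $t^{-k/4}$ conclusion, so there is nothing to add.
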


\begin{proof}
Let $f \in L_T^p$. It follows by  \eqref{eq:St} and \eqref{eq:beta} that, for any $t \in (0,T]$ and $k=0,1,2$ 
\[
\begin{aligned}
& \norm{\nabla^k\Psi(f,a,b)(t)}_{L^p} \le \int_0^t \norm{\nabla^{k+1} S(t-s) (\nabla f(s) \cdot \nabla a(s)) \nabla b(s))}_{L^p} \dif s \\
& \le \int_0^t C(t-s)^{-\frac{k+1}{4}} \norm{ (\nabla f(s) \cdot \nabla a(s)) \nabla b(s)}_{L^p} \dif s\\
& \le C \int_0^t (t-s)^{-\frac{k+1}{4}} s^{-\frac{3}{4}} \bigg[ \bra*{s^{\frac{1}{4}}\norm{\nabla a(s)}_{L^\infty}}\bra*{s^{\frac{1}{4}}\norm{\nabla b(s)}_{L^\infty}} \bra*{s^{\frac{1}{4}}\norm{\nabla f(s)}_{L^p}} \bigg] \dif s \\
& \le C  t^{-\frac{k}{4}}N_{1,\infty,T}(a) N_{1,\infty,T}(b)  \norm{f}_{L_T^p},
\end{aligned}
\]
thus 
\[
 \norm{\Psi(f,a,b)}_{L_T^p} \le C  N_{1,\infty,T}(a)  N_{1,\infty,T}(b)\norm{f}_{L_T^p}.
\]
The corresponding estimates for $\Psi(a,b,f)$ and $\Psi(b,f,a)$ follows similarly and implies the continuity of $\mathcal{L}_{a,b}$ on $L_T^p$.
\end{proof}

Solving \eqref{eq:w1} relies on the spectral properties of $\mathcal{L}_{a,b}$.
\begin{lemma}\label{la:spectrum}
Let $a,b \in X_{0,T}$ for some $T \in (0,+\infty]$, with the property that
\begin{equation}\label{eq:asym_ab_inf}
\lim_{t \to \infty} \norm{a (t+ \cdot)}_X =0 \quad\text{or}\quad \lim_{t \to \infty} \norm{b (t+ \cdot)}_X = 0
\end{equation}
when $T=+\infty$, then the operator $I+\mathcal{L}_{a,b}$ is invertible on $X_T$.
\end{lemma}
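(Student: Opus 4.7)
The plan is to exploit the Volterra (time-causal) structure of $\mathcal{L}_{a,b}$ and invert $I + \mathcal{L}_{a,b}$ inductively on a finite time partition on which the relevant operator norms are strictly less than one. From Lemma \ref{la:Lcontinuity}(ii) the map $\mathcal{L}_{a,b}$ is bounded on $X_T$, but its norm controlled by $\norm{a}_{X_T}\norm{b}_{X_T}$ need not be small, so a direct Neumann series is not available and time slicing is required.

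Fix $\eps > 0$ with $C_4 \eps^2 < \tfrac{1}{2}$, where $C_4$ is the constant of Lemma \ref{la:Lcontinuity}(ii). I construct a partition $0 = t_0 < t_1 < \cdots < t_N$ (with $t_N = T$ if $T < \infty$, and a final unbounded slice $[T^*, \infty)$ if $T = \infty$) such that the translated pieces $\tilde a_j(s) := a(t_{j-1}+s)$ and $\tilde b_j(s) := b(t_{j-1}+s)$ satisfy $\norm{\tilde a_j}_{X_{\tau_j}}, \norm{\tilde b_j}_{X_{\tau_j}} \le \eps$ on $\tau_j := t_j - t_{j-1}$. For the first interval I use the hypothesis $a,b \in X_{0,T}$ to choose $t_1$ so small that $\norm{a}_{X_{t_1}}, \norm{b}_{X_{t_1}} \le \eps$. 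For $j \ge 2$ the decay $\norm{\nabla^k a(t)}_{L^\infty} \lesssim t^{-k/4}\norm{a}_{X_T}$ (and the integrated version for the Carleson piece $N_{k,c,\cdot}$) gives $\norm{\tilde a_j}_{X_{\tau_j}} \lesssim (\tau_j / t_{j-1})^{1/4}\norm{a}_{X_T}$, so taking $t_j = (1+\gamma) t_{j-1}$ for a fixed small $\gamma > 0$ enforces the $\eps$-bound; because the $t_j$ grow geometrically, $[t_1, T]$ is covered in finitely many steps when $T < \infty$. For $T = \infty$, the asymptotic hypothesis \eqref{eq:asym_ab_inf} allows me to pick $T^*$ with $\norm{a(T^*+\cdot)}_X \le \eps$, cover $[t_1, T^*]$ geometrically as above, and take the final interval to be $[T^*, \infty)$.

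Given $g \in X_T$, I solve $(I + \mathcal{L}_{a,b}) w = g$ inductively. Having constructed $w^{(j-1)} \in X_{t_{j-1}}$ solving the equation on $[0, t_{j-1}]$, I extend to $[0, t_j]$ by splitting each $\Psi$-integral as $\int_0^t = \int_0^{t_{j-1}} + \int_{t_{j-1}}^t$ for $t \in (t_{j-1}, t_j]$. The first piece, computed with $w = w^{(j-1)}$, provides a known past forcing $F_j$ which, after translation, lies in $X_{\tau_j}$ by the smoothing estimates \eqref{eq:St} applied to the time-compactly-supported source $\psi(w^{(j-1)}, a, b)$. The second piece is precisely the shifted operator $\mathcal{L}_{\tilde a_j, \tilde b_j}$ acting on the shifted unknown $\tilde w(s) := w(t_{j-1}+s)$. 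By Lemma \ref{la:Lcontinuity}(ii) it has norm at most $C_4 \eps^2 < \tfrac{1}{2}$ on $X_{\tau_j}$, so the reduced equation $(I + \mathcal{L}_{\tilde a_j, \tilde b_j}) \tilde w = g(t_{j-1}+\cdot) - F_j$ admits a unique solution via Neumann series. Glueing the slices produces $w \in X_T$ solving $(I + \mathcal{L}_{a,b}) w = g$, and the same slice-by-slice Neumann argument applied to $(I + \mathcal{L}_{a,b}) w = 0$ forces $w$ to vanish on each $[t_{j-1}, t_j]$, giving injectivity; hence $I + \mathcal{L}_{a,b}$ is invertible on $X_T$.

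The main obstacle is the quantitative decay estimate $\norm{\tilde a_j}_{X_{\tau_j}} \lesssim (\tau_j / t_{j-1})^{1/4} \norm{a}_{X_T}$, especially for the Carleson components $N_{k,c,\cdot}$: one must re-express $\int_{t_{j-1}}^{t_{j-1}+r^4} \fint_{B(x,r)} \abs{\nabla^k a}^{4/k} \dif y \dif t$ so as to extract a small factor from the bounded-away-from-zero lower endpoint, and the pointwise $L^\infty$-decay is the natural tool. In the $T=\infty$ case, one must additionally verify that the past forcing $F_j$ remains a legitimate element of $X$ on the unbounded tail $[T^*, \infty)$; this follows from \eqref{eq:St} together with the integrability of the singularity $s^{-3/4}$ arising from the triple product of gradients of the previously-solved $w^{(N-1)}$, $a$, and $b$ near $s = 0$.
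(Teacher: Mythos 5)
Your time-slicing strategy is essentially the same as the paper's: both reduce to a Neumann series on overlapping/consecutive intervals on which the shifted operator norm of $\mathcal{L}$ is below $1/2$, using $a,b\in X_{0,T}$ for the first slice, the quantitative shift estimate (this is exactly Lemma~\ref{la:Nshift}, which you re-derive rather than cite) for the middle slices, and the asymptotic hypothesis \eqref{eq:asym_ab_inf} for the final unbounded slice.

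The one place your argument is not tight is precisely the step that carries the technical weight of the induction: the claim that the past forcing $F_j$ "lies in $X_{\tau_j}$ by the smoothing estimates \eqref{eq:St}." After the time shift, this forcing takes the form $\tau\mapsto S(\tau)\,\Phi_{j-1}$ with $\Phi_{j-1}=\mathcal{L}_{a,b}w^{(j-1)}(t_{j-1})$ (using the semigroup property on the Duhamel integral). The $L^p\to L^q$ smoothing bound \eqref{eq:St} controls the $L^\infty$ components $N_{k,\infty,\tau_j}$ of the $X_{\tau_j}$ norm, but it does not by itself control the Carleson components $N_{k,c,\tau_j}$. What is actually needed — and what the paper supplies — is (a) Lemma~\ref{la:Lpersist}, which shows $\Phi_{j-1}\in\BMO$ (in fact in $L^\infty$), followed by (b) Lemma~\ref{la:v0}/\ref{la:Sta}, which is the Carleson-measure characterization giving $\norm{S(\cdot)\Phi_{j-1}}_{X_{\tau_j}}\le C\norm{\Phi_{j-1}}_{\BMO}$. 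Without invoking this chain you have not shown the right-hand side of the reduced equation $(I+\mathcal{L}_{\tilde a_j,\tilde b_j})\tilde w = g(t_{j-1}+\cdot)-F_j$ actually lives in $X_{\tau_j}$, so the Neumann series on that slice is not yet justified. With Lemmas~\ref{la:Lpersist} and \ref{la:Sta} inserted at this point, your proof closes and coincides with the paper's.
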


\begin{proof}
We may only consider the case $T=+\infty$. For $T<+\infty$, we can extend $a$ and $b$ by $0$ on $[T,+\infty)$. We want to show the equation
\begin{equation}\label{eq:spec}
f + \mathcal{L}_{a,b}(f) = g
\end{equation}
has a unique solution in $X$ for any $g \in X$. The strategy is to construct the global solution to \eqref{eq:spec} from finite many local solutions. Let $\delta>0$ be a fixed constant, whose value will be decided later. We deduce from Lemma \ref{la:Nshift}, the assumption \eqref{eq:asym_ab_inf} and the fact that $a,b \in X_{0,T}$, the existence of an integer $J \in \N$ and $J+1$ overlapping intervals $I_j = (t_j, t_j')$ covering $(0,+\infty)$, with $t_0=0$, $t_J' = +\infty$ and $t_j < t_{j-1}'$ for any $1 \le j \le J$, such that
\[
\norm{a_j} _{X_{t_j'-t_j}}, \norm{b_j} _{X_{t_j'-t_j}} \le \delta \quad\text{for any } 0 \le j \le J
\]
for
\[
a_j(t)=a(t_j + t), \quad b_j(t)=b(t_j + t) \quad t \in (0,t_j'-t_j).
\]
According to Lemma \ref{la:Lcontinuity},  we may choose $\delta$ small enough such that
\begin{equation}\label{eq:LnormX}
\norm{\mathcal{L}_{a_j,b_j}}_{X_{t_j'-t_j}} \le \frac{1}{2} \quad \text{for } j = 0,1, \cdots, J,
\end{equation}
thus $I+\mathcal{L}_{a_j,b_j}$ is invertible on $X_{t_j'-t_j}$.
Now we construct the global solution to \eqref{eq:spec} in $X$ by starting with \eqref{eq:spec} restricted on $I_0=(0,t_0')$. According to \eqref{eq:LnormX}, there exists a unique $f_0 \in X_{t_0'}$ such that
\[
f_0(t) + \mathcal{L}_{a_0,b_0} f_0(t) = g(t) \quad \forall t \in I_0.
\]
Again using \eqref{eq:LnormX}, there exists a unique $\tilde{f_1} \in X_{t_1'-t_1}$ such that
\[
\tilde{f}_1(\tau) + L_{a_1,b_1} \tilde{f}_1(\tau) = g(t_1 + \tau) + S(\tau) L_{a_0,b_0} f_0(t_1) \quad \forall \tau \in (0,t_1'-t_1).
\]
Note that $L_{a_0,b_0} f_0(t_1) \in \BMO$ by Lemma \ref{la:Lpersist}, hence the right hand side of above equation is in $X_{t_1'-t_1}$ according to Lemma \ref{la:v0}. Now we can define
\[
f_1(t) := \left\{ \begin{array}{ll}
f_0(t) , & \text{for } t \in I_0\\
f_1(t)=\tilde{f}_1(t-t_1) , & \text{for } t \in I_1
\end{array}\right.
\]
which is a unique solution to \eqref{eq:spec} on $I_0 \cup I_1$ and $f_1 \in X_{t_1'}$. Iterating this process $J-1$ times, we obtain the unique solution $f=f_{J} \in X$ to \eqref{eq:spec} on $(0,+\infty)$.
\end{proof}

\subsection{Asymptotics of $u$}
In this section we consider the asymptotic behaviour of the global solution. This implies that \eqref{eq:asym_ab_inf} is fulfilled for $a=b=u$.

Let $\eps>0$ be fixed and $u_0 \in \VMO$ generating a 
unique global solution $u \in X_0$ of \eqref{eq:main} to initial data $u_0$, cf. Proposition \ref{prop:uniqueness}. According to the definition of VMO space, there exists a decomposition:
\[
u_0 = f_0 + g_0, \quad f_0 \in C_0^{\infty}(\R^n) \quad\text{and}\quad g_0 \in \VMO,
\]
where $\norm{g_0}_{\BMO}$ is small enough, so that there exists a unique global solution  $g \in X_0$  of \eqref{eq:main} to initial data $g_0$, cf. Theorem \ref{thm:existence}. 
By Lemma \ref{la:g} below, $g(t)$ stays in $\BMO$ and $\norm{g(t)}_{\BMO} \le C\eps$. For $f=u-g$, if there exists a time $T'>0$ at which $\norm{f(T')}_{\BMO} \le \eps$, then $\norm{u(T')}_{\BMO} \le C \eps$. Provided $\eps$ is small enough, applying the existence and uniqueness result Theorem \ref{thm:existence} to initial data $a = u(T')$ implies $\norm{u(T' + \cdot)}_{X} \le C\eps$, thus the desired asymptotic behaviour \eqref{eq:asym} of $u$.

First we show the persistence of $g(t)$ in $\BMO$ for small initial data $g_0$.
\begin{lemma}\label{la:g}
Let $\norm{g_0}_{\BMO}< \eps$ for some $ \eps < \min \set{1,\rho}$. Then the global solution $g \in X$ of \eqref{eq:main} to initial data $g_0$ satisfies $g(t) \in \BMO$ for any $t \in (0,\infty)$ with
\[
\norm{g(t)}_{\BMO} < C \eps
\]
for a constant $C>0$ independent of $t$.
\end{lemma}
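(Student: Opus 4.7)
The plan is to estimate $\|g(t)\|_\BMO$ directly from the Duhamel representation of the mild solution. Since $g \in X$ solves \eqref{eq:main} with $F(\xi) = \pm|\xi|^2\xi$, we have
\[
g(t) = S(t) g_0 \pm \Psi(g,g,g)(t),
\]
so it suffices to bound each term in $\BMO$ by a constant multiple of $\varepsilon$.

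The nonlinear term is the easy one. Proposition \ref{prop:existence} yields $\|g\|_X \le 2C_1\|g_0\|_\BMO \le 2C_1\varepsilon$, and Lemma \ref{la:Lpersist} gives the uniform bound
\[
\|\Psi(g,g,g)(t)\|_\BMO \le C\|g\|_X^3 \le C\varepsilon^3 \le C\varepsilon,
\]
where the last inequality uses $\varepsilon < 1$.

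The main obstacle is showing that the biharmonic heat semigroup preserves the global $\BMO$ seminorm uniformly in $t$, i.e.\ $\|S(t_0) g_0\|_\BMO \le C\|g_0\|_\BMO$ for a constant independent of $t_0 > 0$. The plan is to invoke the Carleson characterization \eqref{la:BMOcarleson} applied to $S(t_0) g_0$ with the test kernel $\Phi = \nabla^k b(\cdot,1)$ (which is Schwartz with vanishing integral for $k = 1,2$). Using the semigroup identity $S(s) S(t_0) = S(s + t_0)$ and the change of variables $\tau = s + t_0$,
\[
\|S(t_0) g_0\|_\BMO^2 \le C \sup_{x,r} \int_{t_0}^{r^4 + t_0} \fint_{B(x,r)} (\tau - t_0)^{(2k-4)/4} |\nabla^k S(\tau) g_0|^2 \dif y \dif \tau.
\]
We then split into the regimes $r^4 \ge t_0$ and $r^4 < t_0$. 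In the first regime, we replace $B(x,r)$ by the comparable ball $B(x,R)$ with $R = (r^4+t_0)^{1/4} \le 2^{1/4} r$, losing only a universal factor $(R/r)^n$, and then apply the global Carleson bound (Remark \ref{rem:bmo} (2)) for $g_0$; the singular weight $(\tau - t_0)^{-1/2}$ appearing when $k=1$ is handled by further splitting $[t_0, r^4 + t_0] = [t_0, 2t_0] \cup [2t_0, r^4+t_0]$ and using $(\tau - t_0)^{-1/2} \le \sqrt 2 \, \tau^{-1/2}$ on the second piece. In the second regime $r^4 < t_0$, we instead use the pointwise decay $|\nabla^k S(\tau) g_0| \le C\|g_0\|_\BMO \tau^{-k/4}$ (a consequence of Lemma \ref{la:v0} by scaling) and evaluate the resulting elementary integral in $\tau$, which stays bounded by a universal constant since $r^4/t_0 < 1$.

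Putting the two estimates together yields
\[
\|g(t)\|_\BMO \le \|S(t) g_0\|_\BMO + \|\Psi(g,g,g)(t)\|_\BMO \le C\varepsilon + C\varepsilon^3 \le C\varepsilon,
\]
uniformly in $t \in (0,\infty)$, which is the claim. The only delicate step is the splitting argument for $\|S(t_0)g_0\|_\BMO$; everything else reduces to already-established mapping properties of $\Psi$ and the existence theorem.
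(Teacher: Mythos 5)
Your proof is correct and follows the same Duhamel decomposition as the paper's: the nonlinear piece is handled identically via Lemma~\ref{la:Lpersist} together with the $X$-bound $\norm{g}_X \le 2C_1\eps$ from Proposition~\ref{prop:existence}, and the smallness $\eps<1$ to absorb the cube. The genuine difference is in the linear piece. The paper's one-line proof writes $\norm{g(t)}_{\BMO} \le C(\norm{g_0}_{\BMO} + \norm{g}_X^3)$, implicitly using $\norm{S(t)g_0}_{\BMO} \le C\norm{g_0}_{\BMO}$ without citing a lemma; you rightly flag this as the step that actually needs an argument, since Lemma~\ref{la:v0} only controls $S(\cdot)g_0$ in $X_T$, not in $\BMO$ uniformly in $t$. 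Your Carleson-measure argument with the semigroup shift $\tau=s+t_0$ and the regime split $r^4\gtrless t_0$ is sound; the pointwise decay $\abs{\nabla^k S(\tau)g_0}\le C\norm{g_0}_{\BMO}\tau^{-k/4}$ from Lemma~\ref{la:v0} (with $T=\infty$) closes both the $r^4<t_0$ regime and the near-diagonal piece $[t_0,2t_0]$, which you leave implicit but which is the same computation. A slightly shorter route to the same fact, more in the spirit of the paper's scaling arguments: since $\norm{\cdot}_{\BMO}$ and $S$ scale compatibly, $\norm{S(t_0)g_0}_{\BMO}=\norm{S(1)(g_0)_\lambda}_{\BMO}$ with $\lambda=t_0^{1/4}$, and $S(1)$ is convolution with the Schwartz kernel $b(\cdot,1)$ of integral one, which preserves $\BMO$ with a universal constant. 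Either way, you have filled in a step the paper leaves unstated, and the rest of your argument coincides with the paper's.
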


\begin{proof}
First we note that for $F(\xi)=\pm\abs{\xi}^2 \xi$, the solution is characterized by
\[
g(t) = S(t) g_0 \pm \Psi(g,g,g)(t).
\]
It follows by Lemma \ref{la:Lpersist} and Proposition \ref{prop:existence} that
\[
\norm{g(t)}_{\BMO} \le C\bra*{ \norm{g_0}_{\BMO}+  \norm{g}_{X}^3} \le C \norm{g_0}_{\BMO}.
\]
Hence the claim for some constant $C>0$ independent of $t$.
\end{proof}

In the following we prove that there exists a time $T'>0$ at which $\norm{f(T')}_{\BMO} \le \eps$, under the condition (a) or (b) of Theorem \ref{thm:stability} separately.

\subsubsection{The coercive case}

For the coercive nonlinearity $F(\xi)=\abs{\xi}^2\xi$, we first show that the perturbation $f=u-g$ fulfils an energy inequality. Then we derive its decay property.

We first prove that $f=u-g$ possesses following regularity property:
\begin{lemma}\label{la:f_localregularity}
$f \in L_T^p$ for any $T \in(0,\infty)$.
\end{lemma}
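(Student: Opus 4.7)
The plan is a bootstrap of $L^p$-regularity for $f = u - g$ along a finite partition of $(0, T)$. Subtracting the Duhamel formulas for $u$ and $g$ and telescoping via trilinearity of $\Psi$,
\[
\Psi(u,u,u) - \Psi(g,g,g) = \Psi(f,u,u) + \Psi(g,f,u) + \Psi(g,g,f),
\]
so that $f$ satisfies the linear integral equation $f(t) = S(t) f_0 + \mathcal{A} f(t)$ with
\[
\mathcal{A} h := \Psi(h,u,u) + \Psi(g,h,u) + \Psi(g,g,h).
\]
By Lemmas \ref{la:Lcontinuity}(ii) and \ref{la:LcontinuityL}(ii), $\mathcal{A}$ is bounded on both $X_T$ and $L^p_T$, but without smallness on the full interval. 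I therefore use $u, g \in X_0$ together with Lemma \ref{la:Nshift} to select a partition $0 = t_0 < t_1 < \cdots < t_J = T$ on which, with $\tilde u_j(s) := u(t_j + s)$ and $\tilde g_j(s) := g(t_j + s)$, the norms $\norm{\tilde u_j}_{X_{t_{j+1}-t_j}} + \norm{\tilde g_j}_{X_{t_{j+1}-t_j}}$ are small enough that the associated operator $\tilde{\mathcal{A}}_j$ is a $\tfrac{1}{2}$-contraction on both $X_{t_{j+1}-t_j}$ and $L^p_{t_{j+1}-t_j}$. Smallness on the first piece uses $u, g \in X_0$; on subsequent pieces the shift inequalities of Lemma \ref{la:Nshift} reduce the matter to taking $t_{j+1}/t_j$ close to $1$, and finitely many pieces cover $(0, T)$ because $N_{k,\infty}(u), N_{k,\infty}(g) < \infty$.

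On each piece I run a Picard iteration $\tilde f^{(n+1)} = S(\cdot) f(t_j) + \tilde{\mathcal{A}}_j \tilde f^{(n)}$ starting from $\tilde f^{(0)} = S(\cdot) f(t_j)$. The induction hypothesis is $f(t_j) \in L^p$, valid at $j=0$ since $f_0 \in C_0^\infty \subset L^p$. Lemma \ref{la:St} yields $S(\cdot) f(t_j) \in L^p_{t_{j+1}-t_j}$, and since $\nabla^k f(t_j) \in L^\infty$ for $k = 1, 2$ (from $f = u - g \in X$ at positive times), a direct estimate also gives $S(\cdot) f(t_j) \in X_{t_{j+1}-t_j}$. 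The iteration therefore takes place in $L^p_{t_{j+1}-t_j} \cap X_{t_{j+1}-t_j}$ and converges by contraction to a limit $\tilde f^*_j$ in this intersection. On the other hand, $\tilde f_j := f(t_j + \cdot)$ belongs to $X_{t_{j+1}-t_j}$ and solves the same linear equation, so invertibility of $I - \tilde{\mathcal{A}}_j$ on $X_{t_{j+1}-t_j}$ identifies $\tilde f_j = \tilde f^*_j \in L^p_{t_{j+1}-t_j}$. In particular $f(t_{j+1}) \in L^p$, closing the induction, and after finitely many steps $f \in L^p_T$.

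The main technical point is arranging the iteration to preserve both $L^p$- and $X$-memberships at each step, so that the fixed point constructed by $L^p$-contraction can be identified with the concrete $f = u - g$ through uniqueness of the linear equation in $X$; this is why the iteration is run in $L^p \cap X$ rather than $L^p$ alone. The partitioning mirrors the argument used in the proofs of Lemma \ref{la:spectrum} and Proposition \ref{prop:uniqueness}, and the role of the coercive structure $F(\xi)=|\xi|^2\xi$ is only to ensure (via the hypothesis of Theorem \ref{thm:stability}(a)) that $u \in X_0$ exists globally; the argument itself is sign-insensitive.
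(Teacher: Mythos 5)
Your argument is correct and follows essentially the same strategy as the paper: rewrite the Duhamel identity for $f=u-g$ as a linear equation, partition $(0,T)$ using Lemma~\ref{la:Nshift} so the linear operator has small norm on each subinterval, invert on each piece, and bootstrap. The one genuine difference is the choice of linear decomposition: you telescope $\Psi(u,u,u)-\Psi(g,g,g)=\Psi(f,u,u)+\Psi(g,f,u)+\Psi(g,g,f)$, which yields an operator $\mathcal{A}$ whose coefficients involve only the known functions $u$ and $g$; the paper instead writes $f=S(\cdot)f_0-\mathcal{L}f$ with $\mathcal{L}=\mathcal{L}_{u,g}-\tfrac{1}{3}\mathcal{L}_{f,f}$, absorbing the cubic term $\Psi(f,f,f)$ into a ``linear'' operator whose coefficients also involve $f$ itself (legitimate, since $f\in X_T$ is already known). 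Your version is arguably cleaner since the operator is manifestly independent of the unknown. You also make explicit a point the paper leaves implicit: after building the $L^p_T$-fixed point, you identify it with the actual $f$ by running the same contraction in $X_T$, where $f$ is known to live; this is the right way to close the argument. A small citation slip: the estimates you need for the individual terms $\Psi(\cdot,u,u)$, $\Psi(g,\cdot,u)$, $\Psi(g,g,\cdot)$ are parts (i) of Lemmas~\ref{la:Lcontinuity} and~\ref{la:LcontinuityL}, not parts (ii) (which concern the symmetrized $\mathcal{L}_{a,b}$), but this does not affect the substance.
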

\begin{proof}
We note that $f$ satisfies the equation 
\begin{equation}\label{eq:f}
f(t)=S(t)f_0 - \mathcal{L}_{u,g}f(t)+\Psi(f,f,f)(t).
\end{equation}
This equation can be rewritten as
\[
f(t)=S(t)f_0 - \mathcal{L} f(t)
\]
where $\mathcal{L}=  \mathcal{L}_{u,g}-\frac{1}{3} \mathcal{L}_{f,f}$ for $u,g,f \in X_T$. Since $f_0 \in C_0^\infty(\R^n)$, we have $S(t)f_0 \in L^p_T$. Due to Lemma  \ref{la:Nshift} and \ref{la:LcontinuityL}, we can cover interval $(0,T)$ by finitely many overlapping intervals $I_j = (t_j, t_j')$ with $t_0=0$, $t_J'=T$ and $t_j < t'_{j-1}$ for any $1 \le j\le J$, such that \[ \norm{\mathcal{L}}_{L^p_{t_j' - t_j}} \le \frac{1}{2}.\] Note that since $T<\infty$, we do not need the asymptotic assumption \eqref{eq:asym_ab_inf}. Then following a similar argument as in Lemma \ref{la:spectrum}, we can prove that $I+\mathcal{L}$ is invertible on $L_T^p$. Therefore $f(t) = (I+\mathcal{L})^{-1} S(t)f_0$ holds in $L_T^p$.
\end{proof}
This regularity property of $f$ is local in time. Now we exploit the coercivity of the nonlinearity to derive global $L^2$-regularity of $f$. We infer from \eqref{eq:f} that $f$ solves the following equation
\[
\begin{aligned}
\partial_t f + (-\lap)^2 f &= \nabla \cdot \bra*{\abs{\nabla f}^2 \nabla f} + \abs{\nabla g}^2 \lap f + \abs{\nabla f}^2 \lap g
+2(\nabla g \cdot \nabla f) \lap g +2(\nabla g \cdot \nabla f) \lap f\\
& \quad + 2 \nabla g \cdot \nabla^2 g \cdot \nabla f + 2 \nabla g \cdot \nabla^2 f \cdot \nabla g + 2 \nabla g \cdot \nabla^2 f \cdot \nabla f\\
& \quad + 2 \nabla f \cdot \nabla^2 g \cdot \nabla g + 2 \nabla f \cdot \nabla^2 g \cdot \nabla f+ 2 \nabla f \cdot \nabla^2 f \cdot \nabla g.
\end{aligned}
\]
Due to Lemma \ref{la:f_localregularity}, we can multiply this equation by $f$ and integrate it over $\R^n \times (T,T')$ for any $0<T<T'<+\infty$. Regularity in time follows from the properties of semigroup and spatial regularity. Then applying partial integration and H\"older's inequality, we infer that
\[
\begin{aligned}
&\frac{1}{2}  \norm{f(T')}_{L^2}^2 + \int_T^{T'} \norm{\lap f(t)}_{L^2}^2 \dif t
\le \frac{1}{2}  \norm{f(T)}_{L^2}^2+ \int_T^{T'} \bigg[ -\norm{\nabla f(t)}_{L^4}^4 - 5 \norm{\nabla g(t) \cdot \nabla f(t)}_{L^2}^2 \\
&\qquad +N_{2,\infty}(g) t^{-\frac{1}{2}} \norm{\nabla f(t)}_{L^4}^2 \norm{f(t)}_{L^2}
 + 2N_{1,\infty}(g)N_{1,\infty}(f) t^{-\frac{1}{2}} \norm{\lap f(t)}_{L^2}\norm{f(t)}_{L^2} \\
 &\qquad+ 2N_{1,\infty}^2(g) t^{-\frac{1}{2}} \norm{\lap f(t)}_{L^2}\norm{f(t)}_{L^2} \bigg] \dif t.
\end{aligned}
\]
It follows from Young's inequality and the smallness of $g$ that
\begin{equation}\label{eq:f_asym}
\begin{aligned}
&\norm{f(T')}_{L^2}^2 + \int_T^{T'}  \norm{\lap f(t)}_{L^2}^2 \dif t\\
&\le  \norm{f(T)}_{L^2}^2 
+\int_T^{T'} \pra*{8 N_{1,\infty}^2(g) N_{1,\infty}^2(f)+ 8N_{1,\infty}^4(g) +\frac{1}{4} N_{2,\infty}^2(g) } t^{-1} \norm{f(t)}_{L^2}^2 \dif t\\
&\le \norm{f(T)}_{L^2}^2 + C_u\, \eps^2 \sup_{T \le t \le T'} \norm{f(t)}_{L^2}^2 \ln \frac{T'}{T},
\end{aligned}
\end{equation}
where $C_u>0$ is a constant depending on $u_0$. Setting $t_k = e^k$, $k \in \N$, we obtain
\[
\norm{f(T')}_{L^2}^2 + \int_{t_k}^{T'} \norm{\lap f(t)}_{L^2}^2 \dif t \le \norm{f(t_k)}_{L^2}^2 + C_u\, \eps^2 \sup_{t_k \le t \le T'} \norm{f(t)}_{L^2}^2 \quad \forall T' \in [t_k,t_{k+1}]
\]
and
\[
\sup_{t_k \le t \le T'} \norm{f(t)}_{L^2}^2 \le 2 \norm{f(t_k)}_{L^2}^2
\]
if $\eps \le 1/\sqrt{C_u}$, hence
\[
\norm{f(t_{k+1})}_{L^2}^2 \le (1+2C_u \eps^2) \norm{f(t_k)}_{L^2}^2.
\]
This implies
\[
\norm{f(t)}_{L^2}^2 \le Ct^\beta, \qquad\forall t \ge 1 \quad \text{where } \beta = \ln(1+2C_u \eps^2),
\]
where $\eps$ is the upper bound of $\norm{g(t)}_{\BMO}$. Since $f = u-g \in X$, we have
\[
\norm{\nabla^2 f(t)}_{L^\infty} \le Ct^{-\frac{1}{2}}
\]
and by the Gagliardo–Nirenberg inequality 
\[
\norm{\nabla f(t)}_{L^n} \le C\norm{\nabla^2 f(t)}_{L^\infty}^{\frac{n}{n+4}} \norm{f(t)}_{L^2}^{\frac{4}{n+4}}
\le Ct^{\frac{4\beta-n}{2(n+4)}} \to 0 \quad\text{as}\quad t \to \infty
\]
for any $n \ge 4$, provided $\eps \le \sqrt{(e^{n/4}-1)/2C_u}$. 

For $n=2,3$, we note that it follows from \eqref{eq:f_asym}
\[
\int_{t_k}^{t_{k+1}} \norm{\lap f(t)}_{L^2}^2 \dif t \le \norm{f(t_k)}_{L^2}^2 + C_u\, \eps^2 \sup_{t_k \le t \le T'} \norm{f(t)}_{L^2}^2
\]
and thus
\[
\int_{1}^{T}  \norm{\lap f(t)}_{L^2}^2 \dif t \le CT^\beta \quad \forall T>1.
\]
Then by Gagliardo–Nirenberg inequality, we obtain
\[
\begin{aligned}
\frac{1}{T} \int_{1}^{T}  \norm{\nabla f(t)}_{L^n}^{\frac{8}{n}} \dif t & \le C  \bra*{\frac{1}{T} \int_{1}^{T}  \norm{\lap f(t)}_{L^2}^{2} \dif t }\bra*{ \sup_{t \in [0,T]} \norm{f(t)}_{L^2}^{\frac{4-n}{n} \frac{8}{n}}}\\
&  \le CT^{3\beta-1} \to 0 \quad\text{as}\quad T \to \infty
\end{aligned}
\]
provided $\eps \le \sqrt{(e^{\frac{1}{3}}-1)/2C_u}$.

Therefore we have $\lim\limits_{t \to \infty} \norm{f(t)}_{\BMO}= 0$ for any $n \ge 2$, due to the Poinc\'are inequality $\norm{a}_{\BMO(\R^n)} \lesssim \norm{\nabla a}_{L^n(\R^n)}$.

\subsubsection{The non-coercive case}
Now we consider the nonlinearity $F(\xi)= -\abs{\xi}^2 \xi$, in which case $f = u-g$ does not satisfy an energy-type inequality as above. But if $u_0$ is small enough, the asymptotic property of $f$ still holds.

Suppose $u_0 \in \VMO$ satisfies $\norm{u_0}_{\BMO} < \rho$ and $\norm{g_0}_{\BMO} < \eps < \rho$ where $\rho>0$ is the constant in Theorem \ref{thm:existence}. Then the corresponding solutions $u,g \in X$ satisfy $\norm{u}_{X} < 2C_1 \rho$ and $\norm{g}_{X} < 2C_1 \eps$. In the following, we deduce the global $L^p$-regularity of $f$. 
\begin{lemma}\label{la:f_regularity}
Under the above conditions with possibly reduced $\rho$, we have $f \in L_T^p$ for $T =+\infty$.
\end{lemma}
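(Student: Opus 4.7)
The plan is to mirror the structure of Lemma~\ref{la:f_localregularity}, replacing its time-slicing argument by a single \emph{global} Neumann-series inversion of $I+\mathcal{L}$ on $L_\infty^p$, which becomes available in the small-data regime. Exactly as in Lemma~\ref{la:f_localregularity}, rewrite \eqref{eq:f} as the linear fixed-point identity
\[
f(t) = S(t)f_0 - \mathcal{L} f(t), \qquad \mathcal{L} := \mathcal{L}_{u,g} - \tfrac13 \mathcal{L}_{f,f},
\]
using the identity $\Psi(f,f,f) = \tfrac13 \mathcal{L}_{f,f}f$ that follows from the definition of $\mathcal{L}_{a,b}$. The proof then reduces to two ingredients: (i) $S(\cdot)f_0 \in L_\infty^p$, and (ii) $\norm{\mathcal{L}}_{L_\infty^p \to L_\infty^p} \le \tfrac12$.

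For (i), $f_0 \in C_0^\infty(\R^n)$ lies in every $L^p$, so Young's inequality combined with \eqref{eq:kernel_L1} gives, uniformly in $t>0$,
\[
t^{k/4}\norm{\nabla^k S(t)f_0}_{L^p} \le C\norm{f_0}_{L^p} \quad (k=0,1,2),
\]
whence $\norm{S(\cdot)f_0}_{L_\infty^p} < \infty$. For (ii), Lemma~\ref{la:LcontinuityL}(ii) with $T=+\infty$ gives
\[
\norm{\mathcal{L}}_{L_\infty^p \to L_\infty^p} \le C_4\bigl(\norm{u}_X\norm{g}_X + \tfrac13 \norm{f}_X^2\bigr),
\]
and the small-data bounds $\norm{u}_X < 2C_1\rho$, $\norm{g}_X < 2C_1\eps < 2C_1\rho$, $\norm{f}_X \le \norm{u}_X + \norm{g}_X < 4C_1\rho$ make the right-hand side $O(\rho^2)$. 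Shrinking $\rho$ if needed --- exactly the freedom granted by the ``$\rho$ smaller if necessary'' clause in Theorem~\ref{thm:stability}(b) --- we arrange $\norm{\mathcal{L}}_{L_\infty^p \to L_\infty^p} \le \tfrac12$, so that $I+\mathcal{L}$ is invertible on $L_\infty^p$ by Neumann series. The resulting $\tilde f := (I+\mathcal{L})^{-1} S(\cdot)f_0 \in L_\infty^p$ must agree with $f$: on every finite slab $[0,T]$, both $f$ and $\tilde f|_{[0,T]}$ belong to $L_T^p$ and solve $h = S(\cdot)f_0 - \mathcal{L}h$ there, so the invertibility of $I+\mathcal{L}$ on $L_T^p$ (Lemma~\ref{la:f_localregularity}) forces them to coincide; letting $T\to\infty$ yields $f = \tilde f \in L_\infty^p$.

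The main obstacle --- and the reason the energy-based argument of the previous subsection does not transfer verbatim --- is establishing the global operator-norm bound (ii). It requires $\norm{f}_X$ to remain small on the \emph{entire} positive time axis, which is precisely what the smallness assumption $\norm{u_0}_{\BMO} < \rho$ supplies (through Proposition~\ref{prop:existence}), and which is otherwise unavailable in the non-coercive setting.
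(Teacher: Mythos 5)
Your argument is correct but takes a genuinely different route from the paper's. The paper constructs $f$ as the limit of $f_k := u_{k+1} - g$, where $(u_k)$ is the Picard sequence from \eqref{eq:FP}, and shows by induction via the identity $f_{k+1} = f_1 - \mathcal{L}_{u_k,g}f_k - \Psi(f_k,f_k,f_k)$ that $\norm{f_k}_{L_\infty^p} < 2\mu$ uniformly; it then concludes that the limit $f$ lies in $L_\infty^p$. You instead freeze the already-constructed $u,g,f \in X$, linearize the equation for $f$ exactly as in Lemma~\ref{la:f_localregularity}, and use the small-data bounds $\norm{u}_X,\norm{g}_X,\norm{f}_X = O(\rho)$ together with Lemma~\ref{la:LcontinuityL}(ii) to make $I+\mathcal{L}$ a global contraction, inverting it by Neumann series. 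This is arguably cleaner, and buys a real simplification: the paper's final step (``uniformly bounded, hence the limit is in $L_T^p$'') tacitly uses some compactness to pass the $L_\infty^p$ bound to the $X$-limit, which is delicate at $p=1$; your route avoids any passage to the limit in $L_\infty^p$. Your identification $\tilde f = f$ is sound since $\mathcal{L}$ is causal (Volterra), so the contraction bound on $L_\infty^p$ restricts to each $L_T^p$, where $f$ solves the same linear equation by Lemma~\ref{la:f_localregularity} (whose proof is sign-independent and so applies in the non-coercive case). One cosmetic remark: the identity you invoke reads $\Psi(f,f,f) = \pm\tfrac13\mathcal{L}_{f,f}f$ with the $\pm$ from the paper's definition of $\mathcal{L}_{a,b}$; the sign does not affect the operator-norm estimate and hence the argument, but is worth flagging since the paper's own sign conventions in Section~4 are not entirely consistent.
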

\begin{proof}
Since $\norm{u_0}_{\BMO} < \rho$, there exists a sequence $\bra{u_k}_{k \in \N}$ obtained by the fixed point iteration \eqref{eq:FP}, which converges to $u$ in $X_T$ and satisfies $\norm{u_k}_{X_T} < 2C_1 \rho$.
We define 
\[
f_1 = S(t)f_0, \qquad f_{k+1} = u_{k+1} - g \quad\text{for } k \in \N.
\]
Using the notation of the trilinear operator $\Psi$, we can reformulate $f_k$ as
\[
f_{k+1} = u_{k+1}-g = f_1 -\mathcal{L}_{u_k,g}f_k - \Psi(f_k,f_k,f_k).
\]
Since $f_0 \in C_0^\infty$, it follows from \eqref{eq:St} that
 for $t>0$
\[
\norm{\nabla S(t) f_0}_{L^p} \le C \norm{\nabla f_0}_{L^p}
\]
and
\[
\norm{\nabla^2 S(t) f_0}_{L^p} \le C t^{-\frac{1}{4}}\norm{\nabla f_0}_{L^p}, 
\]
hence $f_1 \in L_T^p$. Letting $\mu:=\norm{f_1}_{L_T^p}$, then if $\norm{f_k}_{L_T^p} < 2\mu$ for some $k \in \N$, Lemma \ref{la:LcontinuityL} implies
\[
\begin{aligned}
\norm{f_{k+1}}_{L_T^p} & \le \norm{f_1}_{L_T^p} + \norm{\mathcal{L}_{u_k,g}f_k}_{L_T^p} + \norm{ \Psi(f_k,f_k,f_k)}_{L_T^p} \\
& \le \norm{f_1}_{L_T^p} + \bra*{C_4 \norm{u_k}_{X_T} \norm{g}_{X_T} +C_3\norm{f_k}_{X_T}^2 } \norm{f_k}_{L_T^p} \\
& \le \mu + \bra*{4C_1^2 C_4 \rho \eps + 4C_1^2 C_3 (\rho + \eps)^2} \mu < 2\mu
\end{aligned}
\]
provided $\rho^2 < \dfrac{1}{4C_1^2 (C_4 + 4C_3)}$. Therefore the sequence $\bra{f_k}_{k \in \N}$ is uniformly bounded in $L_T^p$, hence its limit $f \in X_T \cap L_T^p$.
\end{proof}

According to the definition of $L_T^n$, we know that $\lim\limits_{t \to \infty} \norm{\nabla f(t)}_{L^n}= 0$, hence $\lim\limits_{t \to \infty} \norm{f(t)}_{\BMO}= 0$.

\begin{remark}
Applying Lemma \ref{la:f_regularity} to $g=0$, we conclude the following result:
If additionally the initial data $u_0 \in C_0^\infty(\R^n)$, then the solution $u$ obtained in Theorem \ref{thm:existence} is in $L_T^p$ for any $p \in [0,\infty]$.
\end{remark}

Now we put everything together to prove Theorem \ref{thm:stability}:

\textbf{Proof of Theorem \ref{thm:stability}. } Under the condition (a) or (b), for an arbitrary $\eps>0$, there exists a time $T'>0$ at which $\norm{f(T')}_{\BMO} \le \eps$. Together with Lemma \ref{la:g}, this yields  $\norm{u(T')}_{\BMO} \le C\eps$. Hence by the well-posedness theory to small initial data established in Theorem \ref{thm:existence}, we infer that $ \norm{u (T'+ \cdot)}_X \le C \eps$, i.e., the large time behaviour of global solution in Theorem \ref{thm:stability}(i).

Then we can invoke Lemma \ref{la:spectrum} with $a=b=u$ and conclude that $I+\mathcal{L}_{u,u}$ is invertible on $X$. According to Lemma \ref{la:NL}, equation \eqref{eq:w1} is solvable for small enough perturbation $w_0 = u_0 - v_0$, which implies the stability of global solutions and completes the proof of Theorem \ref{thm:stability}(ii).


\nocite{*}
\bibliographystyle{abbrv}
\bibliography{references}

\end{document}